\newtheorem{lemma}{Lemma}
\newtheorem{corollary}{Corollary}
\newtheorem{prop}{Proposition}
\newtheorem{theorem}{Theorem}
\newtheorem{remark}{Remark}
\def\dt{{\sf dt}}
\newcommand{\F}{{\mathcal{F}}}
\renewcommand{\P}{\mathbb{P}}
\newcommand{\Z}{\mathbb{Z}}
\newcommand{\R}{\mathbb{R}}
\newcommand{\eps}{\varepsilon}
\def\a{\alpha}
\def\b{\beta}
\def\g{\gamma}
\def\l{\lambda}
\newcommand{\E}{\mathbb{E}}
\newcommand{\Gen}{\mathsf{G}}
\begin{document}

\title{Boundary effect in competition processes}

\author{Vadim Shcherbakov\footnote{Department of Mathematics, Royal Holloway, University of London, UK. Email address: vadim.shcherbakov@rhul.ac.uk}
\and Stanislav Volkov\footnote{Centre for Mathematical Sciences, Lund University, Sweden. Email address: s.volkov@maths.lth.se}}
\maketitle
\begin{abstract}
This paper is devoted to studying the long-term behaviour of a continuous time Markov chain that can be interpreted as a pair of linear birth processes which evolve with a competitive interaction; as a special case, they include the famous Lotka-Volterra interaction. Another example of our process is related to urn models with ball removals. We show that, with probability one, the process eventually escapes to infinity by sticking to the boundary in a rather unusual way. 
\end{abstract}

\noindent {{\bf Keywords:} Markov chain, birth-and-death process, competition process, Friedman's urn model, Lyapunov function, martingale.}

\noindent {{\bf Subject classification:} 60K35, 60G50}

\section{The model and results}
In this paper we study the long term behaviour of a continuous time Markov chain (CTMC) with values in $\Z_{+}^2$, where $\Z_{+}$ is the set of all non-negative integers, defined on a certain probability space with probability measure $\P$. The Markov chain jumps only to the nearest neighbours, and we consider two types of transition rates described below.
\begin{enumerate}
\item[] {\it Transition rates of type I.} Given the state $(x_1, x_2)\in \Z_{+}^2$ the Markov chain jumps to 
\begin{align}
\label{eqjumps1}
\begin{split}
(x_1+1,x_2) &\quad\text{with rate}\quad\l_1+\a_1 x_1,\\
(x_1,x_2+1) &\quad\text{with rate}\quad\l_2+\a_2 x_2,\\
(x_1-1,x_2) &\quad\text{with rate}\quad x_1g_1(x_2)\quad\text{if}\quad x_1>0,\\
(x_1,x_2-1) &\quad\text{with rate}\quad x_2g_2(x_1)\quad\text{if}\quad x_2>0,
\end{split}
\end{align}
where $\a_i, \l_i>0, i=1, 2$, and $g_i, i=1, 2$ are some non-negative functions. We call the Markov chain with transition rates of type I {\it a competition process with non-linear interaction (specified by functions~$g_1$ and~$g_2$)}. 
\item[] {\it Transition rates of type II.} Given the state $(x_1, x_2)\in \Z_{+}^2$ the Markov chain jumps to 
\begin{align}
\label{eqjumps2}
\begin{split}
(x_1+1,x_2) &\quad\text{with rate}\quad\l_1+\a_1 x_1,\\
(x_1,x_2+1) &\quad\text{with rate}\quad\l_2+\a_2 x_2,\\
(x_1-1,x_2) &\quad\text{with rate}\quad \b_1 x_2\quad\text{if}\quad x_1>0,\\
(x_1,x_2-1) &\quad\text{with rate}\quad \b_2 x_1\quad\text{if}\quad x_2>0,
\end{split}
\end{align}
where $\a_i, \l_i\geq 0, i=1,2$ and $\b_i>0, i=1,2$. We call the Markov chain with transition rates of type II {\it a competition process with linear interaction}.
\end{enumerate}
Competition processes with both non-linear and linear interaction belong to a class of competition processes introduced in~\cite{Reuter} as a natural two-dimensional analogue of the birth-and-death process in $\Z_{+}$. In~\cite{Reuter}, the competition process was defined as a CTMC with values in $\Z_{+}^2$, where the transitions are allowed only to the nearest neighbour states (see Section~\ref{appendix} below). This definition was generalised to a multidimensional case in~\cite{Iglehart1},~\cite{Iglehart2}. Some basic models of competition processes are discussed in~\cite{Anderson}. The term ``a competition process'' was apparently coined due to the fact that original examples of such processes were motivated by modelling a competition between populations (e.g. see~\cite[Examples~1 and~2]{Reuter} and references therein). One of the most known competition processes is the one specified by the famous Lotka-Volterra interaction. In our notation, the Lotka-Volterra interaction corresponds to functions $g_i(z)=z$, $i=1,2$ in the case of transition rates of type I. If, in addition, $\lambda_i=0$, $i=1,2$ in the Lotka-Volterra case, then we get a competition process given in~\cite[Example 1]{Reuter}.

The competition processes with both non-linear and linear interactions can be interpreted in terms of interacting birth-and-death processes. Indeed, if in both cases the death rates are equated to zero, that is $g_1=g_2\equiv 0$ in~\eqref{eqjumps1} and $\beta_1=\beta_2=0$ in~\eqref{eqjumps2}, then the corresponding Markov chain is formed by two independent linear birth processes with immigration. Non-zero death rates determine competitive interaction between the components. Therefore, the competition processes of interest can be naturally embedded into a more general technical framework of multivariate Markov processes formulated in terms of locally interacting birth-and-death processes. In the absence of interaction components of such a Markov process evolve as a collection of independent birth-and-death processes, which long term behaviour is well known. Namely, given a set of transition rates one can, in principle, determine whether the corresponding birth-and-death process is recurrent/positive recurrent, or transient/explosive, and compute various characteristics of the process. However, the presence of interaction can significantly change the collective behaviour of the system (e.g. see~\cite{SVJ},~\cite{MS} and~\cite{SV}).

Furthermore, note that a discrete time Markov chain, DTMC for short, corresponding to the competition process with linear interaction, can be regarded as an urn model with ball removals. In the symmetric case, that is $\a_1=\a_2$, $\b_1=\b_2$ and $\l_1=\l_2$, this DTMC is similar, in a sense, to Friedman's urn model. This similarity enabled us to adapt the Freedman's method for Friedman's urn model (\cite{Freedman}) in order to obtain a key fact in the proofs. We discuss this method in detail in Section~\ref{Freed} below. If both $\a_1=\a_2=0$ and $\l_1=\l_2=0$, then DTMC corresponding to the competition process with linear interaction coincides with the well-known OK Corral model (see e.g.~\cite{Kingman}). If $\a_1=\a_2=0$ and $\l_1,\l_2>0$ then the corresponding competition process can be interpreted as the OK Corral model with ``resurrection". 

Theorems~\ref{T1} and~\ref{T2} below are the main results of the paper. These theorems show that competition processes with both non-linear and linear interaction have similar, rather unusual, long term behaviour. Note that in the theorems  and later in the proofs 
we use the following notation:
$$
i^*=\begin{cases}
2,&\text{ if } i=1;\\
1,&\text{ if } i=2\\
\end{cases}
$$
(i.e. ``the other coordinate'').
\begin{theorem}
\label{T1}
Let $\xi(t)$ be a competition process with non-linear interaction (transition rates of type I) specified by functions $g_1, g_2:[0,\infty)\to [0, \infty)$. Assume that
\begin{itemize}
\item 
$g_1$ and $g_2$ are regularly varying functions with indexes $\rho_1>0$ and $\rho_2>0$ respectively, such that $g_1(0)=g_2(0)=0$ and $g_1(x), g_2(x)>0$ for all $x>0$;
\item 
$\a_i, \l_i>0$, $i=1,2$.
\end{itemize}
Then $\xi(t)$ is a non-explosive transient CTMC and $\P\left(B_1 \cup B_2\right)=1$, where 
$$
B_i=\left\{\xi_i(t)\to \infty,\ 0=\liminf_{t\to\infty} \xi_{i^*}(t)<\limsup_{t\to\infty} \xi_{i^*}(t)=1\right\},\, i=1,2.
$$
\end{theorem}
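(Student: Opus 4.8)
The plan is to combine three ingredients: non-explosiveness, a quantitative ``trapping'' estimate showing that once one coordinate is large the other is confined to $\{0,1\}$, and a ``lock-in'' step showing that such a configuration is eventually reached and never left. The symmetry between the two coordinates will then deliver $\P(B_1\cup B_2)=1$.

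First I would dispose of non-explosiveness using the linear Lyapunov function $V(x_1,x_2)=x_1+x_2$. Writing $\mathcal L$ for the generator of $\xi$, the downward terms are non-positive, so
\[
\mathcal L V(x_1,x_2)=\l_1+\l_2+\a_1 x_1+\a_2 x_2-x_1 g_1(x_2)-x_2 g_2(x_1)\le \l_1+\l_2+(\a_1+\a_2)V,
\]
and the standard linear-growth criterion rules out explosion, so $\xi(t)$ is well defined for all $t$.

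The core mechanism is the following. Suppose $\xi_1$ is large, say $\xi_1=N$. Then the death rate of the second coordinate from $\xi_2=1$ equals $g_2(N)$, and since $g_2$ is regularly varying with index $\rho_2>0$ we have $g_2(N)\to\infty$. Hence each time $\xi_2$ leaves $0$ (which, because $g_2(0)=0$, happens only through immigration at rate $\l_2$), the probability that it climbs to $2$ before returning to $0$ is at most $\tfrac{\l_2+\a_2}{\l_2+\a_2+g_2(N)}\to 0$. Meanwhile $\xi_1$ decreases only while $\xi_2\ge 1$ (using $g_1(0)=0$), and a drift computation for $\log\xi_1$ shows at-least-exponential growth, since the birth rate is $\a_1\xi_1$ while $\xi_2$ spends only a vanishing fraction $\sim \l_2/g_2(\xi_1)$ of its time at $1$. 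Consequently, if $\tau_k$ denote the successive times at which $\xi_2$ jumps from $0$ to $1$, then the $\tau_k$ accumulate at rate $\l_2$ whereas $g_2(\xi_1(\tau_k))$ grows at least like $e^{ck}$, so $\sum_k \tfrac{\l_2+\a_2}{\l_2+\a_2+g_2(\xi_1(\tau_k))}<\infty$ and Borel--Cantelli yields that only finitely many excursions of $\xi_2$ ever reach $2$. Thus $\xi_2$ is eventually confined to $\{0,1\}$; the immigration $\l_2>0$ forces it to $1$ infinitely often and the exploding death rate $g_2(\xi_1)$ forces it back to $0$ infinitely often, giving $\liminf_t \xi_2(t)=0<\limsup_t\xi_2(t)=1$ while $\xi_1(t)\to\infty$. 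This is precisely $B_1$, and symmetrically $B_2$.

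The remaining, and in my view hardest, task is to show that this configuration is reached and is absorbing. I would make the previous paragraph quantitative into a \emph{lock-in} estimate: starting from any state with $\max(\xi_1,\xi_2)\ge N$ and $\min(\xi_1,\xi_2)\le 1$, the probability that the larger coordinate escapes to infinity while the smaller one remains in $\{0,1\}$ forever is at least $1-\eps_N$ with $\eps_N\to 0$. The delicate part here is the self-consistent bootstrap between the two feedback effects — $\xi_1$ grows exponentially only because $\xi_2$ is trapped, and $\xi_2$ is trapped only because $\xi_1$ grows — which must be closed simultaneously on a single high-probability event. One then needs the complementary fact that the process visits the corner region $\{\max\ge N,\ \min\le 1\}$ for arbitrarily large $N$. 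Here the geometry helps: off a bounded set the total death rate $x_1g_1(x_2)+x_2g_2(x_1)$ dominates the birth rate $\a_1x_1+\a_2x_2$ (since $g_i\to\infty$), so the process is repelled from the interior towards the axes, where one coordinate grows; this repulsion is reflected in the negative drift of $\xi_1\xi_2$ off a bounded set. Combining a visit to the corner region for each $N$ with the lock-in estimate, the strong Markov property, and a Borel--Cantelli argument (the step where we adapt Freedman's method) shows that lock-in occurs almost surely, whence transience and $\P(B_1\cup B_2)=1$. I expect the main obstacle to be exactly this symmetry-breaking: controlling the competitive phase and ruling out that the dominant coordinate switches infinitely often before lock-in.
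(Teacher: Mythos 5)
Your skeleton matches the paper's: non-explosion, a quantitative trapping (``lock-in'') estimate near the boundary, an almost-sure hit of the axes, a strong-Markov/geometric-trials argument, and a Borel--Cantelli oscillation step. The genuine gap is that the one hard step --- your lock-in estimate, which is exactly Lemma~\ref{lem_stay@axesT1} of the paper --- is described but never proved, and the route you sketch for it is circular. Your Borel--Cantelli bound $\sum_k(\l_2+\a_2)/(\l_2+\a_2+g_2(\xi_1(\tau_k)))<\infty$ needs $g_2(\xi_1(\tau_k))$ to grow geometrically in $k$, which you justify by exponential growth of $\xi_1$; but that growth holds only \emph{while} $\xi_2$ is trapped in $\{0,1\}$. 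Indeed, when $\xi_2=1$ the drift of $\xi_1$ is $\l_1+(\a_1-g_1(1))\xi_1$, which is strongly negative whenever $g_1(1)>\a_1$, so the growth of $\xi_1$ comes entirely from the time-fraction argument, i.e.\ from the very trapping you are trying to establish. You correctly name this bootstrap, but naming it is not closing it. The paper closes it non-circularly with a single supermartingale: taking $0<\nu<\mu<\min(\rho_1,\rho_2)$ and $f(x,0)=x^{-\nu}-x^{-\mu}$, $f(x,1)=x^{-\nu}$, $f(x,y)=1$ for $y\ge2$, one checks $\Gen f\le 0$ on $\{x>N,\ y\le1\}$ for $N$ large (the condition $\mu<\rho_2$ makes the term $-g_2(x)x^{-\mu}\to-\infty$ dominate the $O(1)$ contributions in $\Gen f(x,1)$); optional stopping for the stopped process $f(\xi(t\wedge\sigma))$ then bounds the exit probability from the strip by $f(x,0)/f(N,0)\le f(N+1,0)/f(N,0)<1$ uniformly in the starting point $x\ge N+1$, and starting points $(x,0)$ with $x\le N$ are handled by a uniformly positive probability of walking along the axis to $(N+1,0)$. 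Some construction of this kind (a Lyapunov function, or an equally careful stopped-process scheme) is what your proposal is missing; without it the ``single high-probability event'' you invoke has not been exhibited.

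Two smaller points. First, your hitting-the-axes argument is stated too strongly: it is false that the death rates dominate ``off a bounded set'' --- at $(x_1,0)$ the total death rate is $0$ (since $g_1(0)=g_2(0)=0$, $x_2=0$) and the drift of the product $\xi_1\xi_2$ there equals $\l_2x_1>0$. What is true, and what the paper uses in Lemma~\ref{L2}, is that the drift of $\xi_i$ equals $\l_i+(\a_i-g_i(x_{i^*}))x_i$, which is negative and bounded away from zero on $\{x_i\ge1,\ x_{i^*}\ge C\}$ with $C$ large; Lemma~7.3.6 of \cite{MPW} then gives a finite mean hitting time of the axes. Second, your appeal to Freedman's urn method is misplaced for this theorem: in the paper that method is needed only for Theorem~\ref{T2} in the regime $\a_1\a_2\ge\b_1\b_2$, where the interior drift can point \emph{away} from the axes. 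Here $g_i\to\infty$ makes the interior uniformly repelling towards the boundary, and the elementary drift bound above suffices; invoking Freedman's machinery would add nothing and obscures which case actually requires it.
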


\begin{theorem}
\label{T2}
Let $\xi(t)$ be a competition process with linear interaction (transition rates of type II) specified by parameters $\a_i\geq 0$, $\l_i>0$ and $\b_i> 0$, $i=1,2$. Then $\xi(t)$ is a non-explosive transient CTMC and $\P\left(A_1 \bigcup A_2\right)=1$, where for $i=1,2$
\begin{align}
\nonumber
 A_i&=\left\{
\lim_{t\to \infty}\xi_i(t)=\infty, 
\ \liminf_{t\to\infty} \xi_{i^*}(t)=0, \
\limsup_{t\to\infty} \xi_{i^*}(t)=\kappa_{i^{\star}}\right\}\text{ and }\\
\label{Li}
\kappa_i&=\kappa_i(\a_{i^*})=\begin{cases}
1,&\text{ if } \a_{i^*}>0;\\
2,&\text{ if } \a_{i^*}=0.
\end{cases} 
\end{align}
\end{theorem}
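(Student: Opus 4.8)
The plan is to split the argument into three stages: non-explosion, localisation of the process along one coordinate axis, and a Borel--Cantelli analysis of the surviving coordinate near the boundary that produces the dichotomy in $\kappa$.

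First I would dispose of non-explosion. The total jump rate out of $(x_1,x_2)$ is at most $\lambda_1+\lambda_2+(\alpha_1+\beta_2)x_1+(\alpha_2+\beta_1)x_2\le C(1+x_1+x_2)$, and since $x_1+x_2$ never exceeds its initial value plus the number of up-jumps performed so far, the counting process of births is stochastically dominated by a linear birth process with immigration, which is a.s. finite on bounded intervals (its rates are only linear). As every down-jump must be preceded by a birth, the total number of jumps up to any finite time is a.s. finite, so $\xi(t)$ is non-explosive. By the symmetry of the roles of the two coordinates it then suffices to prove that a.s. exactly one coordinate tends to infinity, that the other has $\liminf=0$, and to identify its $\limsup$.

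Next comes localisation, which I expect to be the main obstacle. The drift picture is favourable: in the interior $\Gen\xi_i=\lambda_i+\alpha_i\xi_i-\beta_i\xi_{i^*}$, so once $\xi_i$ is large and $\xi_{i^*}$ small the leader drifts up while the follower is pushed down at rate $\beta_{i^*}\xi_i\to\infty$, the self-reinforcing ``rich get richer'' mechanism. To make this almost sure I would adapt Freedman's martingale method for Friedman's urn, as indicated in Section~\ref{Freed}: in the symmetric case one forms a convergent (super)martingale from the embedded composition chain, deduces that the lead eventually freezes, and treats the general case by comparison. The two delicate points are (i) proving genuine escape to infinity (transience) uniformly over the parameter ranges, in particular when some $\alpha_i=0$, rather than mere recurrence near the origin, and (ii) ruling out infinitely many changes of leader; both are needed to realise the event $A_1\cup A_2$, and establishing them simultaneously is where the real work lies.

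Finally, conditioning on coordinate $i$ winning, I would read off $\limsup\xi_{i^*}$ by a two-sided Borel--Cantelli argument. First one pins down the growth rate of the leader: since the follower is small its contribution $\beta_i\xi_{i^*}$ to the leader's death rate is negligible on average, so $\xi_i(t)$ grows like $W e^{\alpha_i t}$ with $W>0$ when $\alpha_i>0$ (a Yule/Kesten--Stigum limit) and like $\lambda_i t$ when $\alpha_i=0$; this has to be run as a bootstrap, as the very smallness of $\xi_{i^*}$ is what guarantees the leader's free growth. Given $\xi_i\approx n$, an excursion of the follower from $0$ reaches level $k$ with probability $\asymp n^{-(k-1)}$, because each step $j\to j+1$ beats the death rate $\beta_{i^*}n$ with probability $\asymp 1/n$, and this exponent depends only on $\alpha_i$ (through $n$), not on $\alpha_{i^*}$. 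Counting excursions over a dyadic scale $n=2^m$, whose number is proportional to the time the leader spends there, namely $\Theta(1)$ when $\alpha_i>0$ and $\Theta(n)$ when $\alpha_i=0$, the expected number of reaches of level $k$ per scale is $\asymp n^{-(k-1)}$ respectively $\asymp n^{-(k-2)}$. Summing over $m$ and applying the convergence and divergence halves of Borel--Cantelli (the divergence half requiring a conditional second-moment control across excursions) yields $\limsup\xi_{i^*}=1$ when $\alpha_i>0$ and $\limsup\xi_{i^*}=2$ when $\alpha_i=0$, that is, exactly $\kappa_{i^*}(\alpha_i)$; and $\liminf\xi_{i^*}=0$ is immediate, since the follower begins each of its infinitely many excursions at $0$. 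This reproduces the description of $A_i$ and, together with $\xi_i\to\infty$ giving transience, completes the proof.
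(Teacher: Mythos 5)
Your stage 3 (the excursion/Borel--Cantelli analysis giving $\kappa=1$ versus $\kappa=2$) is essentially the paper's own closing argument, and your non-explosion argument is fine. But stage 2, which you yourself flag as ``where the real work lies'', is a genuine gap, not a deferred technicality, and the way you propose to fill it would not work. The difficulty is not merely making ``rich get richer'' almost sure near the boundary: when $\a_1\a_2>\b_1\b_2$ there is a cone between the lines $x_2=(\a_1x_1+\l_1)/\b_1$ and $x_2=(\b_2x_1-\l_2)/\a_2$ in which \emph{both} coordinates have strictly positive drift, so the process could in principle escape to infinity along the unstable ray $x_1=r x_2$ with no leader ever emerging. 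Your drift picture ($\xi_i$ large, $\xi_{i^*}$ small) simply does not cover this regime, and this is exactly where the paper deploys Freedman's method --- but in a different role than you assign to it. In the paper it is a proof by contradiction that the process must \emph{hit the boundary} (Lemma~\ref{lem_get2axesT2}): one takes $U_n=\zeta_1(n)-r\zeta_2(n)-d$, shows $\E(U_{n+1}^2|\F_n)\ge U_n^2\left(1+2(\a_1+r\b_2)/R_n\right)$ away from the axes, controls the random normalisation $R_n$ by a strong law for $S_n$ (this randomness of the total rate is the essential difference from Friedman's urn, cf.\ Remark~\ref{R1}), and concludes that $\E(U_n^2)$ would grow faster than $n^2$, contradicting $|U_n|\le|U_0|+n$. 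It does not and cannot show that ``the lead eventually freezes''; no martingale convergence statement of the Freedman type is used for that purpose, and the case $\a_1\a_2<\b_1\b_2$ needs a separate (easier, pure drift) argument which you also omit.

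The second missing ingredient is the mechanism that converts boundary hitting into freezing of the leader and into the \emph{upper} bound on $\limsup\xi_{i^*}$. The paper proves a confinement lemma (Lemma~\ref{lem_stay@axesT2}) via explicit Lyapunov functions --- $f$ of~\eqref{f1/3} when $\a_i>0$, and a logarithmic modification $g$ when $\a_i=0$ --- giving a probability of sticking to the strip $\{x_{i^*}\le\kappa_{i^*}\}$ forever that is bounded below by $\eps>0$ \emph{uniformly} over the starting point on the axis. Combined with the a.s.\ return to the boundary, a geometric-trials argument then shows the process a.s.\ eventually confines to one strip; this simultaneously rules out infinitely many leader changes (your point (ii)) and caps the follower at $\kappa_{i^*}$. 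Your alternative is to get the cap from the excursion estimates themselves, but as you admit this is a bootstrap: the excursion bound $\asymp n^{-(k-1)}$ presupposes the leader's free growth, which presupposes the follower stays low --- precisely the circularity the uniform Lyapunov bound is designed to break. Without some substitute for that lemma (and for the cone analysis above), the proposal does not close.
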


The proof of each theorem consists of two parts. First, we show that, with probability bounded below {\it uniformly} over the initial position on the coordinate axes, the process sticks to the boundary of the quarter plane in the following way. Namely, one of the components of the process tends to infinity while the other component takes only values $0$ and $1$ ($0$, $1$ and $2$ in the special case of Theorem~\ref{T2}) oscillating infinitely often between these values, as described. This is what we call the boundary effect. This step of the proof is the subject of Lemma~\ref{lem_stay@axesT1} (Theorem~\ref{T1}) and Lemma~\ref{lem_stay@axesT2} (Theorem~\ref{T2}). In order to prove each lemma we construct a so-called Lyapunov function for the well-known transience criterion of a countable Markov chain (e.g. see~\cite[Theorem 2.5.8]{MPW}). This allows us to show that the Markov chain is confined to a strip along the boundary, as described. Then this fact is complemented by an argument based on the Borel-Cantelli lemma that gives the oscillation effect, i.e. the process transits from one level of the strip to another infinitely often. In both cases, this implies that the Markov chain under consideration is transient and, with positive probability, escapes to infinity in a certain way.

Intuitively, it seems rather clear that if the process is already at the boundary, then it prefers to stay near the boundary in the future. We use the Lyapunov function method to transform this intuition into a rigorous argument in Lemmas~\ref{lem_stay@axesT1} and~\ref{lem_stay@axesT2}. Although a direct probabilistic proof might be possible in proving both lemmas, we prefer to use the general method, which can be used in other cases, where a direct probabilistic argument might become cumbersome. For example, this is the case in the model in~\cite{MS}, where a similar boundary effect was originally observed for a pair of interacting birth-and-death processes. In fact, we borrow the idea of the construction of the Lyapunov function from that article. 

Another key step of the proof of both theorems consists of showing that the process hits the boundary with probability one. Note that in applications of competition processes to population modelling the hitting time is interpreted as the extinction time of one of the competing populations. Therefore, determining whether the hitting time is finite is of interest in its own right. Sufficient conditions for finiteness of the hitting time and its mean are given in~\cite{Reuter} for competition process in $\Z_{+}^2$. These conditions are rather restrictive, which is not surprising as these conditions were obtained for very general assumptions on the transition characteristic of the competition process. For example, it is possible to use them only in some special cases of competition processes in Theorems~\ref{T1} and~\ref{T2} (see a discussion in Section~\ref{appendix}). We use a direct probabilistic argument in order to show finiteness of the mean hitting time in the case of the competition process with non-linear interaction in Theorem~\ref{T1} and also in the case of the competition process with linear interaction in Theorem~\ref{T2} under assumption $\a_1\a_2<\b_1\b_2$. However, neither this argument nor results of~\cite{Reuter} can be applied in the case of the competition process with linear interaction (Theorem~\ref{T2}) under assumption $\a_1\a_2>\b_1\b_2$. In this particular case showing that the process hits the boundary almost surely is somewhat reminiscent to showing non-convergence to an unstable equilibrium in processes with reinforcement (e.g.\ urn models). Often the method of stochastic approximation is used to show such non-convergence (see e.g.~\cite{Pemantle} and references therein). Further, showing that the hitting time is finite in this case of the linear model is similar also to showing that a non-homogeneous random walk exits a cone, where the Lyapunov function method proved to be useful (e.g. see~\cite{MMW} and references therein). 

Although our model is similar to both urn models with ball removals, and to non-homogeneous random walks, we were unable to apply the above research techniques and used a different method instead. Our method is a modification of a method used in~\cite{Freedman} for studying Friedman's urn model. The original method consists of estimating moments of certain martingales related to the process of interest. The similarity of the competition process with linear interaction with Friedman's urn model allows us to adapt this idea (see Section~\ref{Freed} for details).

\section{Proofs}
In what follows, $\E$ denotes the expectation with respect to the probability measure $\P$.

\subsection{Proof of Theorem~\ref{T1}}\label{ProofT1}
\begin{lemma}
\label{lem_stay@axesT1}
There exists $\eps>0$, depending on the model parameters only, such that
$$
\inf_{x_1\geq 0}\P\left(\tilde A_1|\xi(0)=(x_1, 0)\right)\ge \eps
\,\text{ and }\, \inf_{x_2\geq 0}\P\left(\tilde A_2|\xi(0)=(0, x_2)\right)\ge \eps,
$$
where 
$
\tilde A_i=\{\xi_i(t)\to\infty \text{ and } \xi_{i^{\star}}(t)\in\{0,1\}, \forall t\ge 0\}.
$
\end{lemma}
\begin{proof}[Proof of Lemma~\ref{lem_stay@axesT1}.]
We prove only the first bound of the lemma, that is when the process starts at $\xi(0)=(x_1, 0)$. The other bound will follow by symmetry. In order to simplify notation we denote~$x=x_1$ and~$y=x_2$ in the rest of the proof. Given positive numbers $\nu$ and $\mu$ define the following function on $\Z_{+}^2\setminus{(0, 0)}$
\begin{equation}
\label{f1/3}
f(x,y)=\begin{cases}
x^{-\nu}-x^{-\mu}, &\quad\text{if } y=0;\\
x^{-\nu}, &\quad\text{if } y=1;\\
1, &\quad\text{if } y\ge 2.
\end{cases}
\end{equation}
In the rest of the proof of this lemma we assume that 
\begin{equation}
\label{numu}
0<\nu<\mu<\min(\rho_1, \rho_2).
\end{equation}
Denote $\Gen$ the generator of CTMC $\xi(t)$ with transition rates~\eqref{eqjumps1}. From state $(x,0)$, where $x>0$, transitions are possible only to states $(x+1, 0)$ and $(x, 1)$ with rates $\l_1+\a_1x$ and~$\l_2$ respectively. Therefore,
\begin{equation}
\label{x0}
\Gen f(x,0)=(\l_1+\a_1x)\left((x+1)^{-\nu}-x^{-\nu}-(x+1)^{-\mu}+x^{-\mu}\right)
+\l_2x^{-\mu}.
\end{equation}
Given $\gamma>0$, Taylor's expansion formula shows that 
\begin{equation}
\label{Taylor}
(x\pm1)^{-\gamma}-x^{-\gamma}=\mp\gamma x^{-1-\gamma}+o\left(x^{-1-\gamma}\right)
\end{equation}
for sufficiently large $x>0$. Applying this expansion for the polynomial terms on the right hand side of~\eqref{x0} we obtain that 
\begin{equation}
\label{super0}
\Gen f(x,0) \leq x^{-\nu}\left(-\a_1\nu+ (\a_1\mu +\l_2)x^{-\mu+\nu}+o(1)
\right)\leq 0,
\end{equation}
for all sufficiently large $x$, as $0<\nu<\mu$.

Next, given state $(x,1)$, where $x>0$, the Markov chain can jump only to states $(x+1, 1)$, $(x-1, 1)$, $(x, 2)$ and $(x, 0)$, 
and these jumps occur with rates $\l_1+\a_1x$, $x\cdot g_1(1)$, $\l_2+\a_2$ and $g_2(x)\cdot 1$ respectively. Therefore, 
\begin{equation*}
\begin{split}
\Gen f(x,1)&=(\l_1+\a_1x)\left((x+1)^{-\nu}-x^{-\nu}\right)+xg_1(1)\left((x-1)^{-\nu}-x^{-\nu}\right)\\
&+
(\l_2+\a_2)\left(1-x^{-\nu}\right) +g_2(x)\left(x^{-\nu}-x^{-\mu}-x^{-\nu}\right)
%
\\ &
=-g_2(x)x^{-\mu}+O(1)
\end{split}
\end{equation*}
by applying the expansion~\eqref{Taylor}. Recall that $g_2$ is a regularly varying function with index $\rho_2>0$, that is 
$g_2(x)=x^{\rho_2}l(x)$, where $l$ is a slowly varying function (e.g. see \cite{BGT} for definitions). Since $\mu<\min(\rho_1, \rho_2)$ (see~\eqref{numu}), we get that $g_2(x)x^{-\mu}=x^{\rho_2-\mu}l(x)\to \infty$ as $x\to \infty$. This results in
\begin{equation}
\label{super1}
 \Gen f(x,1)\leq 0
\end{equation}
for all sufficiently large $x$. Define the following stopping time 
$$
\sigma=\inf(t: \xi(t)\notin\{x\ge N+1 \text{ and } y\le 1\}),
$$
where integer $N$ is such that the bounds~\eqref{super0} and~\eqref{super1} hold for all $x>N$. These bounds imply that the random process $Z(t):=f(\xi(t\wedge \sigma))$ is a supermartingale. Since~$Z(t)\ge0$, we conclude that $Z(t)$ converges almost surely to a finite limit $Z_{\infty}$. Next, note that on the event $\{\sigma=\infty\}$ we must have $\xi_1(t)\to\infty$, otherwise, if $\limsup_{t\to\infty}\xi_1(t)=A<\infty$, then~$Z(t)$ cannot converge due to the fact that $f$ is not constant on set $\{0,1,\dots,A\}\times\{0,1\}$ which is irreducible for the chain. Consequently,
$$
Z_\infty=\begin{cases}
1\text{ or }f(N, 0)=N^{-\nu}-N^{-\mu}\text{ or } f(N, 1)=N^{-\nu},& \text{ if } \sigma<\infty,\\
0,& \text{ if } \sigma=\infty.
\end{cases}
$$
Assume that the initial position of the process is $(x, 0)$, where $x\ge N+1$. By the optional stopping theorem
$$
(N^{-\nu}-N^{-\mu})\P(\sigma<\infty)\le \E (Z_\infty)\le Z(0)=f(x, 0)=x_0^{-\nu}-x^{-\mu},
$$
so that 
$$
\P(\sigma<\infty)\le
\frac{f(x_0,0)}{f(N,0)}
\le \frac{f(N+1,0)}{f(N,0)}=1-\eps'<1
$$
for some $\eps'>0$, due to the monotonicity of the function $x^{-\nu}-x^{-\mu}$ for positive $x$. Thus, if $\xi(0)=(x,0)$, where $x\geq N+1$, then, with probability at least $\eps'$, the process $\xi(t)$ stays in set $\{N+1,N+2,\dots\}\times\{0,1\}$ forever. Further, for each initial position $(x, 0)$, where $x\in\{0,1,\dots,N\}$, with a strictly positive probability, the process reaches state $(N+1, 0)$ without exiting set $\{y=0,1\}\in\Z_{+}^2$ (e.g. by just jumping only to the right). Consequently, $\P(\sigma=\infty|\xi(t)=(x, 0))$ is bounded away from zero uniformly over $x\geq 0$. On this event $\xi_1(t)\to\infty$ a.s.
\end{proof}

\begin{lemma}[Lemma~7.3.6 in~\cite{MPW}]\label{lem7.3.6}
Let $Y_t\ge 0$, $t\ge 0$, be a process adapted to a filtration ${\cal G}_t$, $t\ge 0$, and let $T$ be a stopping time. Suppose that there exists $\varepsilon>0$ such that 
$$
\E\left[{\sf d}Y_t|{\cal G}_{t-}\right]\le -\varepsilon \dt \text{ on }\{t\le T\}.
$$
Then $\E [T|{\cal G}_0]\le Y_0/\varepsilon$.
\end{lemma}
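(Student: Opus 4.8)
The plan is to read the infinitesimal drift hypothesis as the assertion that the process $M_t := Y_{t\wedge T} + \eps\,(t\wedge T)$ is a non-negative supermartingale with respect to $(\mathcal{G}_t)_{t\ge 0}$, and then to extract the bound on $\E[T\mid\mathcal{G}_0]$ by letting the truncation go to infinity.

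First I would establish the supermartingale property of $M_t$. On the event $\{t\le T\}$ the hypothesis says that the expected infinitesimal increment of $Y_t$ is at most $-\eps\,\dt$; this is exactly cancelled by the deterministic increment $+\eps\,\dt$ coming from the additive term $\eps\,(t\wedge T)$, so the drift of $M_t$ is non-positive there. On $\{t> T\}$ both $Y_{t\wedge T}$ and $t\wedge T$ are frozen, so $M_t$ stays constant. Accumulating these infinitesimal contributions over an interval $[s,t]$ yields $\E[M_t\mid\mathcal{G}_s]\le M_s$ for all $s\le t$, which is precisely the supermartingale inequality. Here one should also record integrability of $M_t$: since $Y\ge 0$ the only quantity needing control is $\E[t\wedge T]$, which is finite because $t\wedge T\le t$ is bounded for fixed $t$.

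Next I would condition on $\mathcal{G}_0$ and use $M_0=Y_0$ (the additive term vanishes at time $0$) to get $\E[M_t\mid\mathcal{G}_0]\le Y_0$ for every fixed $t$. Since $Y\ge 0$ we have the pointwise bound $M_t\ge \eps\,(t\wedge T)$, whence $\eps\,\E[t\wedge T\mid\mathcal{G}_0]\le Y_0$. Finally, as $t\to\infty$ we have $t\wedge T\uparrow T$ monotonically, so conditional monotone convergence gives $\eps\,\E[T\mid\mathcal{G}_0]\le Y_0$, i.e. $\E[T\mid\mathcal{G}_0]\le Y_0/\eps$, as claimed; in particular the bound forces $T<\infty$ almost surely whenever $Y_0<\infty$.

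The main obstacle is not the algebra but the rigorous translation of the symbolic differential condition $\E[{\sf d}Y_t\mid\mathcal{G}_{t-}]\le -\eps\,\dt$ on $\{t\le T\}$ into the genuine supermartingale statement for $M_t$. In the continuous-time Markov setting of interest this condition is simply $\Gen Y\le -\eps$ on the relevant region, and Dynkin's formula supplies the integrated inequality $\E[Y_{t\wedge T}\mid\mathcal{G}_0]-Y_0\le -\eps\,\E[t\wedge T\mid\mathcal{G}_0]$ directly; the only care required is in justifying the application of Dynkin's formula up to the bounded stopping time $t\wedge T$ (an integrability/localisation check), after which the remaining steps are routine.
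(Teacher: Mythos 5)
Your proof is correct. There is, however, no in-paper argument to compare it against: the paper quotes this statement verbatim as Lemma~7.3.6 of~\cite{MPW} and uses it as a black box (in the proof of Lemma~\ref{L2}), so the only meaningful comparison is with the standard proof in that reference—which is exactly what you have reproduced. Your route—reading the drift condition as saying that $M_t = Y_{t\wedge T}+\eps\,(t\wedge T)$ is a non-negative supermartingale, deducing $\eps\,\E[t\wedge T\mid\mathcal{G}_0]\le \E[M_t\mid\mathcal{G}_0]\le M_0=Y_0$ from $Y\ge 0$, and letting $t\to\infty$ by conditional monotone convergence—is the canonical argument, and you correctly identify the one genuinely delicate point: the symbolic condition $\E[{\sf d}Y_t\mid\mathcal{G}_{t-}]\le -\eps\,\dt$ on $\{t\le T\}$ must either be interpreted as the integrated supermartingale inequality or be upgraded to it (via Dynkin's formula, with an integrability/localisation check) in the Markov setting where the paper applies it.
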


\begin{lemma}
\label{L2}
Define $\tau=\inf\{t:\xi_1(t)=0\text{ or }\xi_2(t)=0\}$. Then $\tau$ is a.s.\ finite.
\end{lemma}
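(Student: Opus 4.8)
The plan is to establish the stronger bound $\E[\tau\mid\xi(0)=(x_1,x_2)]<\infty$ for every starting state, which immediately yields $\tau<\infty$ almost surely. The engine is Lemma~\ref{lem7.3.6}, applied to the nonnegative Lyapunov function $Y_t=h(\xi(t))$ with $h(x_1,x_2)=x_1x_2$. A direct computation of the generator on the open quadrant $\{x_1\ge 1,\ x_2\ge 1\}$ gives
\[
\Gen h(x_1,x_2)=\l_1 x_2+\l_2 x_1+x_1x_2\bigl(\a_1+\a_2-g_1(x_2)-g_2(x_1)\bigr),
\]
so the competition terms $-x_1x_2\bigl(g_1(x_2)+g_2(x_1)\bigr)$ are the only negative contributions. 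Since $g_1,g_2$ are regularly varying with strictly positive indices they diverge to infinity, and using in addition that $\inf_{n\ge 1}g_i(n)>0$ (a finite minimum of positive numbers), I would show there exist $\eps>0$ and an integer $M$ with $\Gen h(x_1,x_2)\le-\eps$ on the quadrant whenever $x_1\ge M$ or $x_2\ge M$.

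The difficulty is that this drift estimate necessarily fails on the bounded corner set $B=\{1\le x_1,x_2\le M-1\}$: when both coordinates are small the birth/immigration rates $\l_i,\a_i x_i$ can exceed the still-small competition rates, so $\Gen h$ may be positive there. I would therefore argue in two stages. \emph{Stage A.} Let $\theta_B=\inf\{t:\xi(t)\in B\}$ and apply Lemma~\ref{lem7.3.6} with $T=\tau\wedge\theta_B$: on $\{t\le T\}$ the chain lies in the interior outside $B$, where $\E[{\sf d}Y_t\mid\mathcal G_{t-}]=\Gen h(\xi(t-))\,\dt\le-\eps\,\dt$, whence $\E[T\mid\xi(0)]\le h(\xi(0))/\eps<\infty$. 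Thus from any starting state the process reaches $B\cup\partial$ in finite mean time; in particular it cannot run off to infinity while staying in the interior.

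\emph{Stage B.} It then remains to bound $\sup_{x\in B}\E[\tau\mid\xi(0)=x]$. From any $x=(x_1,x_2)\in B$ the chain can reach an axis by at most $M-1$ consecutive down-steps in a single coordinate (each of strictly positive rate, e.g.\ $k\,g_2(x_1)>0$), and this entire excursion stays in the bounded region, so there are $t_0>0$ and $p>0$, depending only on $M$ and the parameters, with $\P(\tau\le t_0\mid\xi(0)=x)\ge p$ uniformly over the finite set $B$. If absorption does not occur by $t_0$, a Dynkin estimate based on $\Gen h\le C h$ on the interior bounds $\E[h(\xi(t_0))\,\mathbf 1_{\tau>t_0}\mid x]\le (M-1)^2e^{Ct_0}$, and Stage~A restarted from $\xi(t_0)$ returns the chain to $B\cup\partial$ in finite mean time. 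Reading these alternatives as renewal cycles — each of finite mean length $\bar t$ and each ending in absorption with probability at least $p$ — a geometric/Wald argument gives $\sup_{x\in B}\E[\tau\mid x]\le \bar t/p<\infty$, and combining with Stage~A yields $\E[\tau\mid x]\le h(x)/\eps+\bar t/p<\infty$ for all $x$.

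I expect the main obstacle to be exactly this corner region: the Lyapunov method controls the process cleanly far from the axes, but near the origin the interaction is too weak to generate a drift, so the bounded set $B$ must be treated by the separate escape estimate and stitched to the drift bound through the renewal argument. Verifying $\Gen h\le-\eps$ off $B$ also relies essentially on regular variation to force $g_i\to\infty$; without divergence of the competition rates the product $x_1x_2$ would fail to be a supermartingale far out and a different Lyapunov function would be needed.
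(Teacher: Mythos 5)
Your proof is correct, and it runs on the same engine as the paper's: Lemma~\ref{lem7.3.6} together with the fact that regular variation with positive index forces $g_1,g_2\to\infty$, so that the competition terms dominate far from the origin. The difference lies in the choice of Lyapunov function and in how explicitly the exceptional bounded set is handled. The paper works coordinate-wise: the drift of $\xi_i$ is $\l_i+(\a_i-g_i(x_{i^*}))x_i$, negative and bounded away from zero on $\{x_i\ge 1,\ x_{i^*}\ge C_{i^*}\}$, and Lemma~\ref{lem7.3.6} is invoked for the coordinates on these two overlapping domains; the stitching (the chain may alternate between the two domains, and neither covers the corner $\{1\le x_1<C_1,\ 1\le x_2<C_2\}$) is left implicit in its two-sentence proof. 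You instead use the single product function $h(x_1,x_2)=x_1x_2$, whose generator $\l_1x_2+\l_2x_1+x_1x_2\bigl(\a_1+\a_2-g_1(x_2)-g_2(x_1)\bigr)$ (your computation is right) is $\le-\eps$ on the whole open quadrant outside one finite box $B$, because $\l_1x_2+\l_2x_1\le(\l_1+\l_2)x_1x_2$ there and at least one of $g_2(x_1),g_1(x_2)$ is large; this reduces all the stitching to the single finite set $B$, which you then treat honestly via the uniform escape probability, the $\Gen h\le Ch$ Dynkin bound, and a geometric/Wald cycle argument. What each approach buys: the paper's coordinate functions give the shortest statement and match the intuition that the dominated component is the one that dies out, while your product function turns the two-domain argument into a one-domain argument and writes out in full the renewal step that the paper's proof also needs but suppresses. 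One shared (and standard) elision: both arguments use optional stopping/Dynkin up to $\tau$, hence implicitly that the chain does not explode beforehand; this follows, e.g., from $\Gen(x_1+x_2)\le \l_1+\l_2+\a_1x_1+\a_2x_2$, and you are no more casual about it than the paper is.
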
 
\begin{proof}[Proof of Lemma~\ref{L2}.]
It is easy to see that the infinitesimal mean jump of component $\xi_i(t)$ computed as
$$
\E(\xi_i(t+\dt)-\xi_i(t)| \xi(t)=(x_1, x_2))=(\l_i+(\a_i-g_i(x_{i^{\star}}))x_i)\dt +o(\dt),\qquad i=1,2,
$$
is negative and bounded away from zero in domain $\{x_i\geq 1, x_{i^{\star}}\geq C_{i^{\star}}\}$, $i=1,2$, where both~$C_1$ and $C_2$ are large enough. Now Lemma~\ref{lem7.3.6} yields that in a finite mean time the Markov chain hits the boundary.
\end{proof}
 
\begin{remark}
\label{R0}
{\rm  Note that in the case of the competition process with Lotka-Volterra interaction (mentioned in the introduction),
the lemma follows from \cite[Theorem 5]{Reuter}. }
\end{remark}
Let us finish the proof of the theorem. Let $T_j$ be the duration of $j$-th visit to set $D_N=\{x_1>N, x_2\leq 1\}\cup\{x_1\leq 1, x_2>N\}$, where $N$ is chosen in the proof of Lemma~\ref{lem_stay@axesT1}. This lemma yields that $\P(T_j<\infty)\leq 1-\eps$ on $\{T_{j-1}<\infty\}$. Consequently, with probability one, $T_j<\infty$ only for finitely many $j$, and the process eventually confines to set $D_N$.

Finally, suppose for definiteness that the absorbing set is $\{x_1>N, x_2\leq 1\}$. Since the drift of $\xi_2(t)$ at $x_2=1$ is directed down, the process eventually jumps from level $x_2=1$ to level $x_2=0$. On the other hand, the process cannot stay forever at axis $x_2=0$ as $\l_2>0$. Thus, the Markov chain goes to infinity oscillating between levels $x_2=0$ and $x_2=1$ as claimed. Theorem~\ref{T1} is proved.

\subsection{Proof of Theorem~\ref{T2}}\label{ProofT2}
We start with the following lemma which is similar to Lemma~\ref{lem_stay@axesT1}. 

\begin{lemma}\label{lem_stay@axesT2}
There exists $\eps>0$, depending on the model parameters only, such that
$$
\inf_{x_1\geq 0}\P\left(\tilde A_1|\xi(0)=(x_1, 0)\right)\ge \eps\, \text { and }
\inf_{x_2\geq 0}\P\left(\tilde A_2|\xi(0)=(0, x_2)\right)\ge \eps,
$$
where 
$$
\tilde A_i=\begin{cases}
\xi_i(t)\to\infty \text{ and } \xi_{i^{\star}}(t)\in\{0,1\}, \forall t\ge 0,& \text{ if } \a_{i}>0,\\
\xi_i(t)\to\infty \text{ and } \xi_{i^{\star}}(t)\in\{0,1,2\}, \forall t\ge 0, & \text{ if } \a_{i}=0.
\end{cases}
$$
\end{lemma}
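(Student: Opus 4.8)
The plan is to mimic the structure of the proof of Lemma~\ref{lem_stay@axesT1}, constructing an appropriate Lyapunov function $f(x,y)$ supported on a strip along the boundary and verifying that it is a supermartingale once the unbounded coordinate is large enough. As in the non-linear case, I would prove only the first bound (process starting at $(x_1,0)$), the second following by symmetry, and I would write $x=x_1$, $y=x_2$ throughout. The crucial difference here is that the width of the admissible strip depends on whether $\a_i$ vanishes: when $\a_i>0$ the process is confined to $y\in\{0,1\}$, but when $\a_i=0$ we must allow $y\in\{0,1,2\}$. This suggests defining $f$ piecewise with either two or three nonconstant levels accordingly. For the case $\a_1>0$ I would take $f$ of essentially the same shape as~\eqref{f1/3}, namely $f(x,0)=x^{-\nu}-x^{-\mu}$, $f(x,1)=x^{-\nu}$, and $f(x,y)=1$ for $y\ge 2$, with $0<\nu<\mu$; for the case $\a_1=0$ I would insert an extra intermediate level, e.g.\ $f(x,2)=x^{-\nu}$ (with the level-$1$ value adjusted) and set $f\equiv1$ only for $y\ge 3$.

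The key computations are the generator estimates $\Gen f(x,y)\le 0$ for large $x$ at each nonconstant level $y$. At level $y=0$ the transitions are to $(x+1,0)$ with rate $\l_1+\a_1 x$ and to $(x,1)$ with rate $\l_2$; since the down-rate $\b_1 x_2$ is zero when $y=0$, the computation is identical in spirit to~\eqref{x0}--\eqref{super0}, and the dominant term $-\a_1\nu x^{-\nu-1}\cdot x$ gives negativity when $\a_1>0$. The essential new feature is the linear interaction: at level $y=1$ the downward rate in the first coordinate is $\b_1\cdot 1$ (a constant, not growing like $g_1(x)$), while the downward rate in the second coordinate is $\b_2 x$, which grows linearly in $x$. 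It is exactly this $\b_2 x$ term, pushing $y$ from $1$ down to $0$, that plays the role the $g_2(x)$ term played in~\eqref{super1}: applying~\eqref{Taylor} one finds the decisive contribution is $\b_2 x\cdot(\text{drop in }f\text{ from }y=1\text{ to }y=0)=\b_2 x\cdot(-x^{-\mu})=-\b_2 x^{1-\mu}\to-\infty$, which dominates the remaining $O(1)$ and $O(x^{1-\nu})$ terms provided $\mu<1$. This is why the constraint on $\nu,\mu$ must now read $0<\nu<\mu<1$ rather than being tied to $\rho_1,\rho_2$.

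Once the supermartingale property is established on the strip $\{x>N,\ y\le 1\}$ (resp.\ $\{x>N,\ y\le 2\}$ when $\a_1=0$), the remainder is verbatim the argument of Lemma~\ref{lem_stay@axesT1}: define $\sigma$ as the first exit time from the strip, observe $Z(t)=f(\xi(t\wedge\sigma))\ge 0$ is a convergent nonnegative supermartingale, note that on $\{\sigma=\infty\}$ the first coordinate must diverge (else $Z(t)$ could not converge on an irreducible finite set where $f$ is nonconstant), apply optional stopping to get $\P(\sigma<\infty)\le f(N+1,0)/f(N,0)=1-\eps'<1$ by monotonicity of $x^{-\nu}-x^{-\mu}$, and finally bound the probability uniformly over all starting states $(x,0)$ by noting the process can reach $(N+1,0)$ from any $x\le N$ without leaving the strip with positive probability. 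The main obstacle I anticipate is the $\a_1=0$ case: here the level-$0$ generator estimate loses its $-\a_1\nu$ leading term, so one must check that negativity at $y=0$ is still forced by the next-order contributions and by the three-level structure of $f$, and one must verify that the down-drift from $y=2$ (driven by $\b_2 x$) and from $y=1$ together suffice to keep $Z$ a supermartingale across the wider strip; getting the exponents $\nu<\mu<1$ and the three level-values to satisfy all three inequalities simultaneously is the delicate bookkeeping step.
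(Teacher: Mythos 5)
Your case $\a_1>0$ is essentially the paper's own proof: the same function~\eqref{f1/3} with $0<\nu<\mu<1$, the same generator estimates (with $-\b_2x^{1-\mu}$ as the decisive term at level $y=1$), and the same optional-stopping finish. One slip there: the positive terms to be dominated at level $y=1$ are $O(1)$ and $O(x^{-\nu-1})$, not $O(x^{1-\nu})$; the order-$x^{-\nu}$ contribution from the rightward jumps is negative, and if there genuinely were a positive $O(x^{1-\nu})$ term it could \emph{not} be beaten by $-\b_2x^{1-\mu}$, since $\nu<\mu$.

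The genuine gap is the case $\a_1=0$ --- the only part of this lemma not already contained in Lemma~\ref{lem_stay@axesT1} --- which you leave as the ``main obstacle'' you anticipate, and for which the parameter range you suggest cannot work. Write the level values as $f(x,0)=A(x)$, $f(x,1)=B(x)$, $f(x,2)=C(x)=x^{-\nu}$, $f\equiv 1$ for $y\ge 3$, with power-law gaps whose exponents, as you propose, all lie in $(\nu,1)$. When $\a_1=0$ the rightward rate at level $0$ is the \emph{constant} $\l_1$, so the only negative contribution to $\Gen f(x,0)$ is $\l_1\bigl(A(x+1)-A(x)\bigr)=O(x^{-\nu-1})$; supermartingality at level $0$ therefore forces $B-A\lesssim x^{-\nu-1}$, i.e.\ the $0\!\leftrightarrow\!1$ gap must have exponent greater than $\nu+1>1$, contradicting your constraint that all exponents lie below $1$. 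The other levels then impose a chain: level $2$ needs $\b_2x\,(C-B)\gtrsim \l_2+2\a_2$, i.e.\ $C-B\gtrsim x^{-1}$, while level $1$ needs $\b_2x\,(B-A)\gtrsim(\l_2+\a_2)(C-B)$. A power-law solution does exist --- e.g.\ $C-B\asymp x^{-3/4}$, $B-A\asymp x^{-3/2}$ with $\nu<1/2$ --- but only with the bottom-gap exponent inside $(\nu+1,2)$, outside your stated range. The paper instead abandons power laws in this case and takes
$$
g(x,0)=\frac1{\ln x}-\frac1{\ln^3x}-\frac{\l_1/\l_2}{x\ln^2x}+\frac1{x\ln^3x},\qquad
g(x,1)=\frac1{\ln x}-\frac1{\ln^3x},\qquad
g(x,2)=\frac1{\ln x},
$$
with $g\equiv1$ for $y\ge3$, where the coefficient $\l_1/\l_2$ is tuned so that at level $0$ the rightward-drift term $-\l_1/(x\ln^2x)$ and the upward-jump term $+\l_2\cdot(\l_1/\l_2)/(x\ln^2x)$ cancel exactly, leaving $\Gen g(x,0)\le-\l_2/(x\ln^3 x)+O(1/(x\ln^4x))\le 0$ as in~\eqref{Gen20}. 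So your structural idea (widen the strip to three levels) is the right one, but the quantitative heart of the matter --- when $\a_1=0$ the bottom level has almost no restoring drift, so the $0\!\leftrightarrow\!1$ gap must decay strictly faster than $1/x$ while the $1\!\leftrightarrow\!2$ gap must decay strictly slower than $1/x$ --- is exactly what your proposal never confronts, and as stated it would fail at the level-$0$ estimate.
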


\begin{proof}
Denote $x=x_1$ and $y=x_2$ for simplicity of notations. We prove the lemma only in the case $\xi(0)=(x, 0)$. The proof in the case, where the initial position of the process is on the other axis, is identical.
 
First, assume that $\a_1>0$. Consider function $f$ defined in~\eqref{f1/3} with parameters $\mu$ and $\nu$ such that 
$$
0<\nu<\mu<1.
$$
Let ~$\Gen$ be the generator of the competition process with linear interaction. Given $x>0$ transitions from state $(x, 0)$ are possible only to states $(x+1, 0)$ and $(x, 1)$. These transitions occur with rates $\l_1+\a_1x$ and $\l_2$ respectively. 
Using equation~\eqref{Taylor} we obtain that 
\begin{equation}
\label{Gen11}
\begin{split}
\Gen f(x,0)&=(\l_1+\a_1x)\left((x+1)^{-\nu}-x^{-\nu}-(x+1)^{-\mu}+x^{-\mu}\right)
+\l_2 x^{-\mu}\\
&=-\nu\a_1x^{-\nu} +(\mu\a_1+\l_2)x^{-\mu}+o\left(x^{-\nu}\right)+o\left(x^{-\mu}\right)\leq 0,
\end{split}
\end{equation}
for sufficiently large $x>0$, as $\nu<\mu$.

Now, given that $x>0$, the transitions from state $(x, 1)$ to states $(x+1, 1)$, $(x-1, 1)$, $(x, 2)$ and $(x, 0)$ occur with rates $\l_1+\a_1x$, $\b_1$, $\l_2+\a_2$ and $\b_2x$ respectively. Therefore, using equation~\eqref{Taylor} one more time we obtain that 
\begin{equation}
\label{Gen12}
\begin{split}
\Gen f(x,1)&=(\l_1+\a_1x)\left((x+1)^{-\nu}-x^{-\nu}\right) +\b_1\left((x-1)^{-\nu}-x^{-\nu}\right)\\
&
+(\l_2+\a_2)\left(1-x^{-\nu}\right)-\b_2x^{1-\mu}\\
&\leq -\nu\a_1x^{-\nu}+\l_2+\a_2-\b_2 x^{1-\mu}+o(x^{-\nu})\leq 0,
\end{split}
\end{equation}
for all sufficiently large $x$, as $\mu<1$. 

Next, given $N>0$ define 
\begin{equation*}
\sigma=\inf(t: \xi(t)\notin\{x>N,\, y=0, 1\}).
\end{equation*} 
Assume that~$N$ is so large that the bounds~\eqref{Gen11}) and~\eqref{Gen12} hold for $x>N$. Then $Z(t)=f(\xi(t\wedge \sigma))$ is a non-negative supermartingale. The proof can be finished by using the argument based on the optional stopping theorem,  in a manner similar to the proof of Lemma~\ref{lem_stay@axesT1}.

Assume now that $\a_1=0$. In this case, instead of function~\eqref{f1/3} we consider the following function
$$
g(x,y)=\begin{cases}
\frac1{\ln x}-\frac1{\ln^3 x}
-\frac{\l_1/\l_2}{x\ln^2 x}
+\frac{1}{x\ln^3 x}
, &\text{if } y=0;\\
\frac1{\ln x}-\frac1{\ln^3 x}, &\text{if } y=1;\\
\frac1{\ln x}, &\text{if } y=2;\\
1, &\text{if } y\ge 3.
\end{cases}
$$
Using Taylor's expansion, we obtain that 
\begin{align}
\label{Gen20}
\begin{split}
\Gen g(x, 0)&\leq -\frac{\l_2}{x\ln^3 x}+ O\left(\frac1{x\ln^4 x} \right)\leq 0,\\
\Gen g(x, 1)&\leq -\frac{\b_2 \l_1/\l_2}{\ln^2 x}
+ O \left( \frac1{\ln^{3}x} \right)\leq 0,\\
\Gen g(x, 2)&\leq 
-\frac{\b_2 x}{\ln^3 x}+ O \left( 1 \right)\leq 0,
\end{split}
\end{align}
for all sufficiently large $x$. The rest of the proof is analogous to the proof in case $\a_1>0$ above, and we skip the details.
\end{proof} 

The other key ingredient of the proof is the following lemma, which is verbatim of Lemma~\ref{L2} in the proof of Theorem~\ref{T1}.
 
\begin{lemma}\label{lem_get2axesT2}
Define $\tau=\inf\{t:\xi_1(t)=0\text{ or }\xi_2(t)=0\}$. Then $\tau$ is a.s.\ finite.
\end{lemma}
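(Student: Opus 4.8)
The plan is to follow the skeleton of Lemma~\ref{L2}, recording first the infinitesimal coordinate drifts
\[
\E\bigl(\xi_i(t+\dt)-\xi_i(t)\mid \xi(t)=(x_1,x_2)\bigr)=\bigl(\l_i+\a_i x_i-\b_i x_{i^\star}\bigr)\dt+o(\dt),\qquad i=1,2 .
\]
The essential new difficulty is that the death term $\b_i x_{i^\star}$ is now only linear, so, unlike the regularly varying case where $g_i(x_{i^\star})\to\infty$ makes the drift of $\xi_i$ negative on an entire strip $\{x_{i^\star}\ge C_{i^\star}\}$, here the drift of $\xi_i$ is negative only on a cone of directions. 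This forces a split according to the sign of $\a_1\a_2-\b_1\b_2$, exactly as foreshadowed in the introduction.

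In the subcritical regime $\a_1\a_2<\b_1\b_2$ I would use the linear Lyapunov function $h(x_1,x_2)=c_1x_1+c_2x_2$, whose generator is
\[
\Gen h=c_1\l_1+c_2\l_2+(c_1\a_1-c_2\b_2)x_1+(c_2\a_2-c_1\b_1)x_2 .
\]
The condition $\a_1\a_2<\b_1\b_2$ is precisely what permits a choice of $c_1,c_2>0$ with $\a_2/\b_1<c_1/c_2<\b_2/\a_1$, making both bracketed coefficients strictly negative; then $\Gen h\le-\eps$ outside some bounded set $B$. Applying Lemma~\ref{lem7.3.6} with $Y_t=h(\xi(t))$ and $T$ the hitting time of $B$ shows the chain reaches $B$ in finite mean time. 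Since $B$ is finite and from every interior state of $B$ an axis is reached by a bounded run of death jumps whose probability is bounded below, the boundary is hit with probability at least some $p_0>0$ on each visit to $B$; together with the finite-mean returns to $B$ this gives $\tau<\infty$ almost surely (indeed with finite mean). The critical case $\a_1\a_2=\b_1\b_2$ collapses both coefficients to zero and leaves a positive constant drift, so $h$ must be corrected by a sublinear term there (or the case absorbed by continuity into the supercritical analysis).

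The supercritical regime $\a_1\a_2>\b_1\b_2$ is the real obstacle and is where the method of Section~\ref{Freed} must be invoked. Here no positive linear combination can have negative drift: the linearised deterministic field $\dot x_1=\a_1x_1-\b_1x_2$, $\dot x_2=\a_2x_2-\b_2x_1$ has matrix $A=\bigl(\begin{smallmatrix}\a_1&-\b_1\\-\b_2&\a_2\end{smallmatrix}\bigr)$ with $\det A=\a_1\a_2-\b_1\b_2>0$ and $\operatorname{tr}A>0$, hence two positive eigenvalues $\lambda_+>\lambda_->0$. The dominant eigenvector has negative slope and lies outside the quarter-plane, while the subdominant one lies inside it; consequently the interior ray is a \emph{repeller} for the induced dynamics of the ratio $\xi_2/\xi_1$ (trajectories off this ray are swept to an axis), i.e.\ an unstable equilibrium, and the stochastic statement $\tau<\infty$ is the assertion that the process does not cling to this ray. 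I would pass to the embedded jump chain (finitely many jumps take almost surely finite time, so absorption of the jump chain at an axis yields $\tau<\infty$), and, following Freedman's analysis of Friedman's urn, track the linear functional $\ell(x_1,x_2)$ given by the left eigenvector of $\lambda_+$, which vanishes exactly on the unstable ray and satisfies $\Gen\ell=\lambda_+\ell+\ell(\l_1,\l_2)$. Normalising it by the dominant growth rate yields an (approximate) martingale, and estimating its moments as in~\cite{Freedman} one shows this martingale does not converge to $0$, so $|\ell(\xi)|$ grows at the dominant rate $e^{\lambda_+t}$ while the radial size grows only like $e^{\lambda_-t}$; equivalently the normalised composition $\xi_2/(\xi_1+\xi_2)$ is forced to the boundary of its range, and a final short argument turns ``one coordinate is asymptotically negligible'' into ``that coordinate actually hits $0$''.

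The main obstacle is entirely contained in this last case: the Freedman-type moment estimates that rule out convergence of the relevant martingale to zero — the analogue of the variance lower bound preventing the urn composition from settling at its unstable proportion — together with the final passage from asymptotic negligibility of the smaller coordinate to its genuine extinction in finite time. By contrast the subcritical (and critical) regimes are routine once the Lyapunov function is chosen, the only bookkeeping being the standard finite-excursion argument for reaching an axis from the bounded set $B$.
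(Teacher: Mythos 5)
Your case decomposition is the right one, and in the subcritical case $\a_1\a_2<\b_1\b_2$ your linear Lyapunov function $h=c_1x_1+c_2x_2$ combined with Lemma~\ref{lem7.3.6} and a finite-excursion argument is a sound (indeed slightly tidier) version of what the paper does there. Your choice of functional in the supercritical case is also exactly right: the paper's $U_n=\zeta_1(n)-r\zeta_2(n)-d$ is precisely your left eigenvector $\ell$ for the dominant eigenvalue $\lambda_+=\a_1+r\b_2$ (the quadratic~\eqref{quad} is nothing but the left-eigenvector equation). The genuine gap is in how you conclude. Your key step -- ``estimating its moments as in Freedman one shows this martingale does not converge to $0$'' -- is a black box, and it is exactly the step that does not follow: Freedman-type second-moment estimates (Theorem~\ref{TT}) show the normalised functional is $L^2$-bounded and hence \emph{converges}; they give no information about whether the limit is nonzero, and ruling out convergence to $0$ is precisely the non-convergence-to-an-unstable-equilibrium problem that the introduction says the authors could not attack with existing techniques. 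The paper never proves any such pathwise statement. Instead it argues by contradiction on the jump chain: on $\{\tau>n\}$ one has the conditional inequality~\eqref{eqUsnos}, $\E\bigl(U_{n+1}^2\mid\F_n\bigr)\ge U_n^2\bigl(1+2(\a_1+r\b_2)/R_n\bigr)$; if $\P(\tau=\infty)>0$, iterating this forces $\E(U_n^2)$ to grow like $n^{2(1+\delta)}$, contradicting the \emph{deterministic} bound $|U_n|\le|U_0|+Cn$ that holds for any nearest-neighbour jump chain. Moreover, to get an exponent strictly larger than $2$ one must control the \emph{random} time normalisation $R_n$ (the total jump rate) -- the fundamental difference from Friedman's urn stressed in Remark~\ref{R1}. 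The paper does this via a strong LLN $S_n/n\to\tilde\rho$ (Lemma~\ref{LemLLN}, Proposition~\ref{liminfW}) and the decomposition~\eqref{RnSnTn} with $l<0$ and $T_n\ge C_2S_n$, yielding $R_n\le(\a_1+r\b_2)n/(1+\delta)$ eventually (Proposition~\ref{Rn_prop}). Your sketch contains no control of this random time scale at all; relatedly, your assertion that the ``radial size grows only like $e^{\lambda_- t}$'' presupposes the process stays near the unstable ray, i.e.\ assumes what is to be disproved.

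The second gap is the critical case $\a_1\a_2=\b_1\b_2$, which is part of the lemma (only $\a_i\ge0$ is assumed) and cannot be ``absorbed by continuity'': with your $h$ both linear coefficients vanish and the generator degenerates to a strictly positive constant in the interior, while any sublinear radial correction contributes only $o(1)$ to the drift, so no choice of sign can be restored this way. The paper needs a genuinely different estimate here: $S_n$ is then a Lamperti-type walk, Theorem~3.2.7 of~\cite{MPW} gives $\limsup_{n\to\infty}\log S_n/\log n\le 1/2$ a.s., hence $R_n\le n^{1/2+\delta}$ eventually, and the same second-moment-versus-deterministic-bound contradiction goes through with an even faster multiplicative factor. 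In short: your skeleton and your unstable functional are correct, but the two hard analytic ingredients -- the contradiction mechanism that replaces the unproved ``non-convergence to $0$'', and the law-of-large-numbers (respectively $\sqrt n$, in the critical case) control of the random total jump rate -- are missing.
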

Lemma~\ref{lem_get2axesT2} is proved in Section~\ref{Prooflem_get2axesT2}. Similarly to the proof of Theorem~\ref{T1}, it follows from Lemma~\ref{lem_stay@axesT2} and Lemma~\ref{lem_get2axesT2} that, with probability~$1$, the process eventually confines either to set $\{x_2\leq \kappa_2\}$, or to set $\{x_1\leq \kappa_1\}$, where $\kappa_i,\, i=1,2$ are defined in~\eqref{Li}. Suppose now for definiteness that the absorbing set is $\{x_2\leq \kappa_2\}$ and consider the following two cases. First, suppose that~$\a_1>0$, so that~$\kappa_2=1$. In this case the process cannot stay forever at line~$x_2=0$. Indeed, let $(a_{j}, 0),\, j\geq 1$ be a sequence of points successively visited by the Markov chain on line $x_2=0$. The probability of jump $(a_{j}, 0)\to (a_{j}, 1)$ can be bounded below by~$O(1)/(a_1+j)$ (for instance, consider the worst case scenario, when the process always jumps to the right); therefore, by the conditional Borel-Cantelli lemma, there are infinitely many jumps from line~$x_2=0$ to line~$x_2=1$. Combining this with Lemma~\ref{lem_get2axesT2}, or, simply noting that the probability of a jump from line~$x_2=1$ to line~$x_2=0$ is bounded below (it tends to $\b_2/(\a_1+\b_1+\b_2)$ as $x\to \infty$) one can conclude that the process cannot stay forever at line~$x_2=1$ as well; hence, it goes to infinity oscillating between lines~$x_2=0$ and~$x_2=1$, as claimed. 

Finally, suppose that $\a_1=0$ in which case~$\kappa_2=2$. The probability of transition $(x_1,0)\to (x_1,1)$ is equal to $\l_2/(\l_1+\l_2)$ for all~$x_1$, so that the Markov chain cannot forever stay at~$x_2=0$. Similarly to the above, let $(a_{j}, 1),\, j\geq 1$ be a sequence of points successively visited by the Markov chain on line $x_2=1$. The probability of jump $(a_{j}, 1)\to (a_{j}, 2)$ can be bounded below by~$O(1)/(a_1+j)$. Again, by the conditional Borel-Cantelli lemma, there are infinitely many jumps from line~$x_2=1$ to line~$x_2=2$. Combining this with Lemma~\ref{lem_get2axesT2}, or, simply noting that probabilities of jumps both from line $x_2=2$ to line $x_2=1$, and from line $x_2=1$ to line $x_2=0$, are bounded below by constants, we obtain that the process goes to infinity oscillating between lines~$x_2=0$ and~$x_2=2$, as described.

\subsection{Proof of Lemma~\ref{lem_get2axesT2}}
\label{Prooflem_get2axesT2}

Note that each of the following lines $x_2=\frac{\a_1x_1+\l_1}{\b_1}$ (line~$l_1$) and $x_2=\frac{\b_2x_1-\l_2}{\a_2}$ 
(line~$l_2$) divides~$\Z_+^2$ into two parts. The infinitesimal drift of $\xi_1(t)$ is negative above the line $l_1$, and positive below it. Similarly, the infinitesimal drift of~$\xi_2(t)$ is negative below line~$l_2$ and positive above it. There are two cases of mutual location of lines~$l_1$ and~$l_2$, namely, $\a_1\a_2 < \b_1 \b_2$ and $\a_1\a_2>\b_1 \b_2$.

If $\a_1\a_2 < \b_1 \b_2$, then line~$l_2$ is located above line~$l_1$ in the positive quarter plane. Both process components have negative drift in the domain between the lines. Moreover, the drift of one of the process components remains negative outside the negative cone. Consequently, with probability $1$, the process eventually hits the axes. The formal proof is similar to the case of competition processes with non-linear interaction in Theorem~\ref{T1}, therefore we skip the details. In addition, finiteness of the hitting time in this case follows from results in~\cite{Reuter} (see Section~\ref{appendix}). 

The case $\a_1\a_2\geq \b_1 \b_2$ is different from the previously considered cases. In order to explain this, assume for a moment that $\a_1\a_2>\b_1\b_2$. Then there is a positive drift in {\em both} coordinates in the domain between lines $l_1$ and $l_2$. If the process starts outside the domain, where the drift of the smallest component is strictly negative, then this component becomes zero in a finite mean time by the same reasoning as in all previous cases. However, if the initial position of the process is inside the domain, then one has to show that the process eventually leaves the domain.
 
The proof of the lemma in this case is given in Section~\ref{alpha>beta}. The proof is based on an appropriately modified method used in \cite{Freedman} for analysis of Friedman's urn model. The main idea of the original method is explained in Section~\ref{Freed}.

\subsubsection{Freedman's method for Friedman's urn model}
\label{Freed}

In this section we explain the main idea of Freedman's method for Friedman's urn model. First, recall that Friedman's urn model with parameters $\a\geq0$ and $\b\geq 0$ describes a DTMC $(W_n, B_n)\in \R^2_{+}\setminus (0, 0)$ evolving as follows. Given $(W_n, B_n)=(W,B)$ the Markov chain jumps to $(W+\a, B+\b)$ with probability $W/(W+B)$, and to $(W+\b, B+\a)$ with probability $B/(W+B)$. In order to demonstrate the main idea of the method we are going to consider another Markov chain (the auxiliary process) instead. The auxiliary process is a DTMC $(X_n, Y_n)\in \Z_{+}^2\setminus (0, 0)$ evolving as follows. Given $(X_n, Y_n)=(x,y)$ it jumps to states $(x+1, y)$ and $(x, y+1)$ with probabilities $\frac{\a x+\b y}{(\a+\b)(x+y)}$ and $\frac{\a y+\b x}{(\a+\b)(x+y)}$ respectively. Similar to competition processes with linear interaction, DTMC $(X_n, Y_n)$ takes values in the integer quarter plane and jumps to the nearest neighbour states. There is also a certain similarity between transition probabilities of $(X_n, Y_n)$ and the competition processes with linear interaction, although the interaction between $X_n$ and $Y_n$ can now be regarded as cooperative rather than competitive. Furthermore, the auxiliary process and Friedman's urn model are closely related, since
\begin{equation*}
\begin{split}
W_n&=\a X_n+\b Y_n\\
B_n&=\b X_n +\a Y_n.
\end{split}
\end{equation*}
In other words, $X_n$ ($Y_n$ resp.) can be viewed as the number of times a white (black resp.) colour has been picked up in Friedman's urn model by time $n$. Without loss of generality, we apply the Freedman's method to the auxiliary process $(X_n, Y_n)$. Given $\a\geq 0$ and $\b\geq 0$ define
\begin{equation}
\label{rho}
\rho=\frac{\a-\b}{\a+\b}.
\end{equation}
Theorem~\ref{TT} below describes the asymptotic behaviour of the auxiliary process under certain assumptions. The theorem is almost a verbatim copy of a part of Theorem 3.1 in \cite{Freedman} for the original Friedman's urn model with parameters $\a$ and $\b$. We state and prove the theorem for the auxiliary process for the following reason. There is certain similarity between our competition process and the auxiliary process, which allows to adapt the idea of the proof of Theorem~\ref{TT} for our purposes, therefore we provide the proof here for the readers convenience.

\begin{theorem}
\label{TT}
If $\rho>1/2$ then $n^{-\rho}(X_n-Y_n)$ converges almost surely to a non-trivial random variable.
\end{theorem}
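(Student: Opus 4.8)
The plan is to reduce the statement to the $L^2$ convergence of a martingale built from the difference $D_n:=X_n-Y_n$. Write $S_n:=X_n+Y_n$ and note that, since exactly one coordinate increases by one at each step, $S_n=S_0+n$ is deterministic. A one-step computation from the transition probabilities gives, on $\{(X_n,Y_n)=(x,y)\}$,
\[
\E[D_{n+1}-D_n\mid \F_n]=\frac{(\alpha x+\beta y)-(\alpha y+\beta x)}{(\alpha+\beta)(x+y)}=\rho\,\frac{D_n}{S_n},
\]
so that $\E[D_{n+1}\mid\F_n]=D_n(1+\rho/S_n)$. This multiplicative drift suggests normalising by $a_n:=\prod_{k=0}^{n-1}\bigl(1+\rho/(S_0+k)\bigr)$ and setting $M_n:=D_n/a_n$; since $a_{n+1}=a_n(1+\rho/S_n)$, this makes $M_n$ an $\F_n$-martingale. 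Taking logarithms and comparing $\sum_k\rho/(S_0+k)$ with the harmonic series shows $\log a_n=\rho\log n+C+o(1)$, hence $a_n\sim c\,n^{\rho}$ for some $c>0$; in particular $n^{-\rho}a_n\to c$, so it suffices to prove that $M_n$ converges almost surely to a non-trivial limit $M_\infty$, whence $n^{-\rho}(X_n-Y_n)\to cM_\infty$ almost surely.

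First I would establish $L^2$-boundedness. Since $D_{n+1}-D_n=\pm1$ with conditional probabilities $p^{\pm}$, one has $\E[D_{n+1}^2\mid\F_n]=D_n^2(1+2\rho/S_n)+1$, and dividing by $a_{n+1}^2=a_n^2(1+\rho/S_n)^2$ yields
\[
\E[M_{n+1}^2\mid\F_n]=M_n^2\,\frac{1+2\rho/S_n}{(1+\rho/S_n)^2}+\frac1{a_{n+1}^2}\le M_n^2+\frac1{a_{n+1}^2},
\]
because $(1+u)^2\ge 1+2u$ makes the fraction at most $1$. Taking expectations and summing gives $\E[M_n^2]\le \E[M_0^2]+\sum_{k\ge1}a_k^{-2}$. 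This is where the hypothesis enters: $a_k^{-2}\sim c^{-2}k^{-2\rho}$, and $\rho>1/2$ forces $2\rho>1$, so the series converges and $\sup_n\E[M_n^2]<\infty$. By the martingale convergence theorem $M_n\to M_\infty$ almost surely and in $L^2$.

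It remains to show $M_\infty$ is non-trivial. Because the increments of an $L^2$-martingale are orthogonal, $\mathrm{Var}(M_\infty)=\sum_{n\ge0}\E\bigl[(M_{n+1}-M_n)^2\bigr]$, and a direct computation of the conditional variance gives $\E[(M_{n+1}-M_n)^2\mid\F_n]=4p^+p^-/a_{n+1}^2$, which is strictly positive whenever $0<p^+<1$. As long as the chain starts (and hence stays) in a region where both transition probabilities are strictly positive — all interior states when $\beta>0$, and all states with both coordinates positive when $\beta=0$ — at least one such term is positive, so $\mathrm{Var}(M_\infty)>0$ and the limit is non-degenerate.

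The main obstacle, and the sole place where $\rho>1/2$ is used, is the summability of $a_n^{-2}$ that underpins the $L^2$ bound; everything else is the standard device of turning a multiplicatively-drifting additive functional into a martingale. The only genuinely delicate bookkeeping is verifying the exact inequality for the conditional second moment and the asymptotic $a_n\sim c\,n^{\rho}$, both elementary, together with stating the non-degeneracy clause carefully so as to exclude the degenerate boundary starts in the $\beta=0$ (P\'olya) case.
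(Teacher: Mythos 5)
Your proposal is correct and follows essentially the same route as the paper: you build the same martingale (the difference $X_n-Y_n$ normalised by the product of the drift factors $1+\rho/S_n$), derive the same one-step recursions for the conditional first and second moments, and use $\rho>1/2$ in exactly the same place, namely to make $\sum_k a_k^{-2}\sim\sum_k c^{-2}k^{-2\rho}$ converge so that the martingale is $L^2$-bounded and Doob's theorem applies. Your packaging of the second-moment step, $\E[M_{n+1}^2\mid\F_n]\le M_n^2+a_{n+1}^{-2}$ via $(1+u)^2\ge 1+2u$, is just a cleaner rearrangement of the paper's iterated recursion $\E(U_{n+1}^2)=\E(U_n^2)a_n(2)+1$.

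One genuine difference is worth noting: you prove non-degeneracy of the limit, which the paper does not. The paper's proof stops at almost sure convergence of the martingale (citing Doob), leaving the word ``non-trivial'' in the statement unjustified; your argument via orthogonality of $L^2$-martingale increments, with $\E[(M_{n+1}-M_n)^2\mid\F_n]=4p^+p^-/a_{n+1}^2>0$, fills that gap, and your caveat about boundary starts in the $\beta=0$ case (where the chain is absorbed on an axis and the limit is a constant) is a real edge case that the paper's unqualified statement glosses over. So your proof is, if anything, more complete than the one in the paper.
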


\begin{proof}
Define the difference between the components $X_n$ and $Y_n$ as $U_n=X_n-Y_n$, and their total amount as $S_n=X_n+Y_n$; note that $S_n=S_0+n$. We have
\begin{equation}
\label{recur}
\begin{split}
\E\left(U_{n+1}|U_n\right)&
=U_n\left(1+\frac{\a-\b}{(\a+\b)S_n}\right)=U_n\left(1+\frac{\a-\b}{s+(\a+\b)n}\right)\\
\E\left(U_{n+1}^2|U_n\right)&
=U_n^2\left(1+\frac{2(\a-\b)}{(\a+\b)S_n}\right)+1=U_n^2\left(1+\frac{2(\a-\b)}{s+(\a+\b)n}\right)+1,
\end{split}
\end{equation}
where $s=(\a+\b)S_0$.
Denote 
$$
a_n(j)=\left(1+\frac{(\a-\b)j}{s+(\a+\b)n}\right),\, j=1,2.
$$ 
In these notations we get that 
\begin{equation}
\label{recur1}
\begin{split}
\E\left(U_{n+1}|U_n\right)&=U_na_n(1),\\
\E\left(U_{n+1}^2|U_n\right)&=U_n^2a_n(2)+1.
\end{split}
\end{equation}
The first equation in the preceding display means that 
\begin{equation}
\label{Zn}
Z_n:=U_n\prod_{k=0}^{n-1}a_k^{-1}(1),\, n\geq 1,
\end{equation}
is a martingale. The second equation gives
$$
\E\left(U_{n+1}^2\right)=\E\left(U_n^2\right)a_n(2)+1.
$$
Using this identity recursively, we arrive at the following equation
$$
\E\left(U_{n+1}^2\right)=\left(U_0^2+\sum\limits_{j=0}^n\prod\limits_{k=0}^ja_k^{-1}(2)\right)\prod_{k=0}^{n}a_k(2).
$$
Note that 
$$
\prod_{k=0}^m a_k(j)=(C_j+o(1)) m^{j\rho},\quad j=1,2,
$$
for some $C_1, C_2>0$, so that 
$$
\sum\limits_{j=0}^{\infty}\prod\limits_{k=0}^ja_k^{-1}(2)<\infty,
$$
as $\rho>1/2$. Consequently, $\sup_{n}n^{-2\rho}\E(U_n^2)<\infty$. Now Doob's convergence theorem implies that martingale $Z_n$ defined in~\eqref{Zn} converges almost surely to a finite limit as $n\to \infty$. Theorem~\ref{TT} is thus proved. 
\end{proof}

\subsubsection{Proof of Lemma~\protect\ref{lem_get2axesT2} in case  \texorpdfstring{$\a_1\a_2>\b_1\b_2$}{Lg}}
\label{alpha>beta}

\begin{proof}[ Proof in the symmetric case.] We start with the symmetric case $\l_1=\l_2=\l$, $\a_1=\a_2=\a$ and $\b_1=\b_2=\b$, where $\a>\b$, in order to provide an intuition for the way how the proof works. Denote by $\zeta(n)=(\zeta_1(n), \zeta_2(n))\in \Z_{+}^2$, $n\in \Z_{+}$, the DTMC corresponding to CTMC~$\xi(t)$. Let $\{{\cal F}_n\}_{n=1}^\infty$ be the standard natural filtration associated with the Markov chain $\zeta(n)$. Define 
\begin{align}
\label{tau}
S_n&=\zeta_1(n)+\zeta_2(n),\quad 
U_n=\zeta_1(n)-\zeta_2(n),\quad
\tau=\min\{m: \zeta_1(m)=0 \text{ or } \zeta_2(m)=0\}.
\end{align}

Assume that $\P(\tau=\infty)>0$ and get a contradiction. First, observe that
\begin{equation}
\label{step}
\E\left(U_{n+1}^2|{\cal F}_n\right)
=U_n^2\left(1+\frac{2(\a+\b)}{2\l +(\a+\b)S_n}\right)+1 \quad \text{on the event} \quad \{ \tau>n\}.
\end{equation}

\begin{remark}
\label{R1}
{\rm 
This expression is quite similar to the second equation in~\eqref{recur}; the fundamental difference is that the sum of the components, i.e.\ $S_n$, is now a {\it random} process. This is in contrast to both the auxiliary process and Friedman's urn model (as well as to other urn models without ball removals), where the sum of the components is a deterministic, usually linear, function of $n$. The main idea of what follows below is that the long-term behaviour of $S_n$ can be effectively controlled due to its simple asymptotic behaviour. 
}
\end{remark}

Trivially, $S_{n+1}-S_n=\pm 1$ and 
\begin{align*}
\P(S_{n+1}=S_n+1|S_n)&=\frac{\l+\a S_n}{2\l+(\a+\b)S_n},\\
\P(S_{n+1}=S_n-1|S_n)&=\frac{\b S_n}{2\l+(\a+\b)S_n}.
\end{align*}
The preceding display shows that the long term behaviour of $S_n$ is similar to a homogeneous simple random walk that jumps right and left with probabilities~$\frac{\a}{\a+\b}$ and~$\frac{\b}{\a+\b}$ respectively. Therefore, the strong law of large numbers, with some variations\footnote{A rigorous proof can be found further in Lemma~\ref{LemLLN}.}, implies that for any $\eps ,\delta > 0$ there exists $N$ such that
\begin{equation}
\label{srw}
\P\left(S_n\in [(\rho-\delta)n, (\rho +\delta)n], \,\, \forall\, n\geq N\right)\geq 1-\eps,
\end{equation}
where $\rho$ is defined in~\eqref{rho}.

Further, fix some $\delta>0$ such that $\rho+\delta<1$ and an arbitrary $\eps>0$; according to~\eqref{srw} there exists an $N=N(\eps)$ so large that
$$
\sigma_N=\min\left(n>N:\ S_n\notin [(\rho-\delta)n, (\rho +\delta)n] \right).
$$
satisfies 
\begin{align}\label{defsigman}
\P(\sigma_N=\infty)\ge 1-\eps.
\end{align} 
It follows from equation~\eqref{step} and the definition of $\sigma_N$ that 
\begin{align}
\label{eqUn}
\E\left(U_{n+1}^2|{\cal F}_n\right) &\geq U_{n}^2b_n 
\quad \text{on} \quad \left\{N\le n<\min\left(\sigma_N, \tau\right)\right\},
\\
\text{where }
b_n&=1+\frac{2(\a+\b)}{2\l +(\a+\b)(\rho+\delta)n}
=1+\frac{2 n^{-1}}{\rho+\delta} +O\left(n^{-2}\right).
\nonumber
\end{align}
Iterating~\eqref{eqUn} gives that
$$
\E\left(U_{n+1}^2|{\cal F}_N\right)\geq b_nb_{n-1}\dots b_{N+1} b_{N} U_{N}^2 \quad \text{on} \quad 
\left\{N\le n<\min\left(\sigma_N, \tau\right)\right\}.
$$
Assume $n\ge N$ everywhere below. Then
\begin{align*}
\prod_{k=N}^n b_k&=
\prod_{k=N}^n e^{\frac{2 k^{-1}}{\rho+\delta} +O\left(\frac 1{k^2}\right)}
=
e^{\sum_{k=N}^n\left[\frac{2 k^{-1}}{\rho+\delta} +O\left(\frac 1{k^2}\right)\right]}
\geq C_2\cdot n^{\frac2{\rho+\delta}}
\end{align*}
for some $C_2=C_2(N)>0$, so that 
$$
\E\left(U_{n+1}^2|{\cal F}_N\right)\geq C_2\cdot n^{\frac2{\rho+\delta}}1_{\{n<\min(\sigma, \tau)\}}.
$$
Dividing both sides by $n^2$ and taking the expectation gives 
$$
\E\left(\frac{U_{n+1}^2}{n^2}\right)\geq C_2\, n^{\frac2{\rho+\delta}-2}\, \P(n<\min(\sigma_N, \tau)).
$$
The left hand side of the preceding display is uniformly bounded in $n$, as $|U_n|\leq |U_0|+n$ for all $n\geq 0$. On the other hand, bound $\rho+\delta<1$ implies that $n^{\frac2{\rho+\delta}-2}\to \infty$ as $n\to \infty$. Therefore, if $\lim_{n\to \infty}\P(n<\min(\sigma_N, \tau))=\P(\sigma_N=\infty, \tau=\infty)>0$, as asserted, then we get a contradiction. Consequently, $\P(\sigma_N=\infty, \tau=\infty)=0$ and
$$
\P(\tau=\infty)\le \P(\sigma_N=\infty, \tau=\infty)+\P(\sigma_N<\infty)\le\eps
$$
by~\eqref{defsigman}. Since $\eps>0$ is arbitrary, $\P(\tau=\infty)=0$. 

In turn, this means that the process hits the axes in a finite time a.s., as claimed.
\end{proof}

We are now going to extend the above argument on the general case. Without loss of generality \ assume from now on that
\begin{align}\label{a1a2}
\a_1 \ge\a_2.
\end{align}

Let us find the asymptotic equilibrium direction for $\zeta(n)$, which will be shown to be unstable later in the proof. Indeed, if we assume that both 
$\l_{1}=0$ and $\l_2=0$ (they contribute very little to the birth rates when $x$ and $y$ are large) then the slope of the drift of the vector field corresponding to our system is given by
$$
\frac{\a_2 y - \b_2 x}{\a_1 x - \b_1 y}.
$$
It coincides with the slope of the vector $(x,y)$ if and only if $x=ry$ where $r$ solves
\begin{equation}
\label{quad}
\frac 1r=\frac{\a_2-r\b_2}{r\a_1-\b_1} \quad \Longleftrightarrow \quad
\b_2 r^2+ (\a_1-\a_2)r -\b_1=0,
\end{equation}
Since $x,y\ge 0$, we choose $r$ to be the positive root of~\eqref{quad} which can be written as
\begin{equation*}
r=\frac{-(\a_1-\a_2)+D}{2\b_2}, 
\end{equation*}
where
$$
D=\sqrt{(\a_1-\a_2)^2+4\b_1\b_2}=
\sqrt{(\a_1+\a_2)^2+4(\b_1\b_2-\a_1\a_2)}.
$$
Note that equation~\eqref{quad} can be rewritten as follows
\begin{equation}
\label{quad1}
r=\frac{\b_1+r\a_2}{\a_1+r\b_2}.
\end{equation}
Define the following variables
\begin{equation}
\label{Rn}
\begin{split}
R(x,y)&=(\a_1+\b_2)x+(\a_2+\b_1)y+\l_1+\l_2, \\
R_n&=R(\zeta(n)),
\end{split}
\end{equation}
and 
\begin{equation}
\label{Un}
\begin{split}
U(x,y)&=x-ry-d,\\
 U_n&=U(\zeta(n)),
\end{split}
\end{equation}
where 
\begin{equation}
\label{d}
d=-\frac {2(\l_1-\l_2r)+\a_1+\b_2r^2}{2(\a_1+\b_2r)}
\end{equation}
Assume that $x,y>0$. Then
\begin{align}
\E(U_{n+1}^2 | \zeta(n)=(x,y))&=
(U_n+1)^2\frac{\l_1+\a_1x}{R_n}+(U_n-1)^2\frac{\b_1y}{R_n}\nonumber\\
&+(U_n-r)^2\frac{\l_2+\a_2y}{R_n}+(U_n+r)^2\frac{\b_2x}{R_n}\nonumber
\\ 
&=U_n^2+2(\a_1+r\b_2)U_n\frac{x-\frac{\b_1+r\a_2}{\a_1+r\b_2}y-d}{R_n}
+\frac{2U_nQ_1+Q_2(x,y)}{R_n}\nonumber\\
&=U_n^2\,\left[1+\frac{2(\a_1+r\b_2)}{R_n}\right]
+\frac{2U_nQ_1+Q_2(x,y)}{R_n}.\label{UnExpansion}
\end{align}
where we used equation~\eqref{quad1} to rewrite the second term in the third line of the preceding display and used the notations
$$
Q_1:=d(\a_1+\b_2r)+\l_1-r\l_2 \,\, \text{ and }\,\, 
Q_2(x,y):=(r^2\b_2+\a_1)x+(\b_1+r^2\a_2)y.
$$
Consequently,
\begin{align*}
2U_nQ_1+Q_2(x,y)&=
2(\a_1+\b_2r)\left(d+\frac{2(\l_1-\l_2r)+\a_1+\b_2r^2}{2(\a_1+\b_2r)}\right)x\\
&+\left(\b_1+\a_2r^2-2rd(\a_1+\b_2r)-2r(\l_1-\l_2r)\right)y+Q_3,
\end{align*}
where $Q_3=\l_1+\l_2r^2-2d(\l_1-\l_2r)-2d^2(\a_1+\b_2r)$. Note that the coefficient in front of $x$ on the right hand side of the preceding equation is equal to $0$ by the definition of $d$ in~\eqref{d}. Further, using again the definition of $d$, we simplify the coefficient in front of $y$ and arrive at the following equation 
$$
2U_nQ_1+Q_2(x,y)=(\b_1+\a_1r+\a_2r^2+\b_2r^3)y+Q_3.
$$
Note that $\b_1+\a_1r+\a_2r^2+\b_2r^3>0$, therefore, 
\begin{equation}
\label{Q1Q2}
2U_nQ_1+Q_2(x,y)>0,
\end{equation}
for all $y\geq y_0$, where $y_0$ is a value depending on the model parameters.

Equations~\eqref{UnExpansion} and~\eqref{Q1Q2} imply that 
\begin{equation}
\label{eqUsnos}
\E(U_{n+1}^2 |\zeta(n)=(x,y))\geq U_n^2\,\left(1+\frac{2(\a_1+r\b_2)}{R_n}\right), \quad \text{if } x>0 \text{ and } 
 y\geq y_0.
\end{equation}
Our next goal is to obtain an upper bound for the total transition rate $R_n$. Define 
\begin{align}
S(x,y)&=(\a_1\a_2+\b_1\b_2+2\a_2\b_2)x
+(\a_1\a_2+\b_1\b_2+2\a_1\b_1)y,\nonumber\\
 S_n&=S(\zeta(n)),\label{Sn}\\
T(x,y)&=\b_2 x+(r\b_2 +\a_1-\a_2)y,\nonumber\\
T_n&=T(\zeta(n)) \nonumber
\end{align}
and
\begin{equation}
\label{tilde-rho}
\tilde\rho=\a_1\a_2-\b_1\b_2=(\a_2-r\b_2)(\a_1+r\b_2)>0.
\end{equation}
\begin{remark}
{\rm Note that $U_n$, defined by~\eqref{Un}, functions as a measure of departure from the equilibrium; $R_n$ is the common denominator, $S_n$ is the (almost) constant drift term (see~\eqref{Sndrift}), while~$T_n$ is some sort of a remainder, up to a multiplying coefficient, as it will be clear later in the proof.}
\end{remark}

Now we want to write $R(x,y)$ defined in~\eqref{Rn} as a linear combination of $S(x,y)$, $T(x,y)$, and an extra constant. In order to find the unknown coefficients, observe that both $S$ and~$T$ are linear in $x$ and $y$ with $S(0,0)=T(0,0)=0$. Therefore, $R(x,y)=\l_1+\l_2+k\, S(x,y)+ l\, T(x,y)$ where $k$ and $l$ can be found by solving the elementary system of linear equations 
$$
\begin{cases}
\frac{\partial R(x,y)}{\partial x}=
k\, \frac{\partial S(x,y)}{\partial x}+
l\, \frac{\partial T(x,y)}{\partial x}
\\
\frac{\partial R(x,y)}{\partial y}=
k\, \frac{\partial S(x,y)}{\partial y}+
l\, \frac{\partial T(x,y)}{\partial y},
\end{cases}
$$
yielding 
$$
k=\frac{\a_1+r\b_2 }{\tilde\rho}>0, \quad
l=-\frac{ (\a_1\a_2+2\a_2\b_2+\b_1\b_2)r+\a_1\a_2+2\a_1\b_1+\b_1\b_2 }{\tilde\rho}<0.
$$
Hence,
\begin{equation}
\label{RnSnTn}
R_n=(\l_1+\l_2)+\frac{\a_1+r\b_2 }{\tilde\rho}S_n + l T_n.
\end{equation}

The next statement is probably known, but just in case we present its proof here as well. 

\begin{lemma}\label{LemLLN}
Suppose that we are given a process $Z_n$ adapted to the filtration $\F_n$ such that $|Z_{n+1}-Z_n|\le B$ for all $n$ and
$$
a\le \E(Z_{n+1}-Z_n | \F_n)\le a + \frac{\sigma}{Z_n}
$$
for some constants $B>0$, $a>0$ and $\sigma\ge 0$. Then $Z_n/n\to a$ a.s.
\end{lemma}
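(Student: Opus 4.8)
The plan is to use a Doob-type decomposition of $Z_n$ into a martingale plus a predictable drift, and then to squeeze the normalised drift between $a$ and $a+o(1)$. Write $D_k=\E(Z_{k+1}-Z_k\mid\F_k)$ for the one-step conditional drift and set
\begin{equation*}
M_n=Z_n-Z_0-\sum_{k=0}^{n-1}D_k,
\end{equation*}
so that $(M_n)$ is a zero-mean martingale with respect to $\F_n$. Since $|Z_{k+1}-Z_k|\le B$ forces $|D_k|\le B$, the martingale increments satisfy $|M_{k+1}-M_k|\le 2B$. The first step is therefore to invoke the strong law of large numbers for martingales with bounded increments: because $\E[(M_{k+1}-M_k)^2\mid\F_k]\le 4B^2$, one has $\sum_k k^{-2}\E[(M_{k+1}-M_k)^2]<\infty$, and the martingale strong law gives $M_n/n\to 0$ almost surely.

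Next I would exploit the lower bound $D_k\ge a$ on its own. Dividing the decomposition by $n$ gives $Z_n/n = Z_0/n + M_n/n + \frac1n\sum_{k=0}^{n-1}D_k \ge Z_0/n + M_n/n + a$, and since $M_n/n\to 0$ this yields $\liminf_n Z_n/n\ge a>0$. In particular $Z_n\to\infty$, and for almost every sample path there is a (random) index $N$ such that $Z_n\ge \frac a2\, n$ for all $n\ge N$.

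The remaining step is to control the upper bound $D_k\le a+\sigma/Z_k$. Averaging the two-sided bound on the drifts gives
\begin{equation*}
a\le \frac1n\sum_{k=0}^{n-1}D_k\le a+\frac{\sigma}{n}\sum_{k=0}^{n-1}\frac1{Z_k}.
\end{equation*}
Splitting the last sum at the index $N$ above, the finitely many initial terms contribute a path-dependent constant divided by $n$, which vanishes, while $\sum_{k=N}^{n-1}Z_k^{-1}\le \frac2a\sum_{k=N}^{n-1}k^{-1}=O(\log n)$; hence $\frac1n\sum_{k=0}^{n-1}Z_k^{-1}\to 0$ almost surely. By the squeeze, $\frac1n\sum_{k=0}^{n-1}D_k\to a$, and combining this with $M_n/n\to 0$ in the decomposition yields $Z_n/n\to a$ almost surely.

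The only delicate point is the martingale strong law invoked in the first step; everything after it is an elementary squeeze. It is worth emphasising that the argument is not circular: the lower drift bound $D_k\ge a$, and hence $Z_n\gtrsim an$, is established using only $M_n/n\to 0$, and this lower bound is precisely what makes $\sum_k Z_k^{-1}$ grow only logarithmically, so that the $\sigma/Z_k$ correction becomes negligible after normalisation by $n$.
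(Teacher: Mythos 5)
Your proof is correct, but it takes a genuinely different technical route from the paper's. You use the Doob decomposition $Z_n=Z_0+M_n+\sum_{k<n}D_k$ and dispose of the martingale part once and for all via the martingale strong law of large numbers (bounded increments give $\sum_k k^{-2}\E[(\Delta M_k)^2]<\infty$, hence $M_n/n\to 0$ by the $L^2$/Kronecker argument); after that, both the lower bound $\liminf_n Z_n/n\ge a$ and the final squeeze are deterministic manipulations of the compensator. The paper instead works directly with $\hat Z_n=Z_n-an$ as a submartingale, applies the Azuma--Hoeffding inequality together with the Borel--Cantelli lemma to get the liminf, and then repeats the concentration argument for the compensated process $\bar Z_n=\hat Z_n-\sum_{i\le n}\sigma/\max\{1,Z_i\}$, using stopping times $\tau_N=\inf\{n\ge N: Z_n<1\}$ to keep that compensator under control. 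The shared core of both arguments is the two-stage structure: first the lower drift bound alone gives $Z_n\gtrsim an$, and this is exactly what makes $\sum_k \sigma/Z_k=O(\log n)$, so the correction term is negligible after division by $n$. What your route buys is economy: one decomposition, one standard martingale theorem, no stopping times and no exponential estimates. What the paper's route buys is quantitative concentration (explicit exponential bounds on deviation probabilities) and robustness when $Z_n$ can be small: the truncation $\max\{1,Z_n\}$ and the stopping times $\tau_N$ handle paths where $Z_k$ dips below $1$, whereas your argument divides by $Z_k$ for the finitely many initial indices and therefore implicitly assumes $Z_k\neq 0$ there --- an assumption already built into the lemma's hypothesis (otherwise $\sigma/Z_n$ is undefined), so this is a cosmetic rather than a substantive gap, but it is worth a remark in a polished write-up.
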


\begin{proof}
Fix an $\eps>0$ and let $\hat Z_n=Z_n-a n$. Then $\hat Z_n$ is a submartingale with jumps bounded by $B+a$, and hence by Azuma-Hoeffding inequality 
\begin{align*}
\P(\hat Z_n-\hat Z_0\le -\eps n)\le \exp\left\{-\frac{\eps^2 n}{2(B+a)^2}\right\}
\end{align*}
and by Borel-Cantelli lemma the event $\{\hat Z_n/n\le -\eps+\hat S_0/n\}$ occurs finitely often. Since $\eps>0$ is arbitrary and $\hat Z_0/n\to 0$ we get that $\liminf_{n\to\infty} \hat Z_n/n\ge 0$ yielding $\liminf_{n\to\infty} Z_n/n\ge a$.

Next, define
$$
\bar Z_n=\hat Z_n -\sum_{i=1}^n \frac{\sigma}{\max\{1, Z_n\}}.
$$
On the event $\{Z_n\ge 1\}$ we have $\E(\bar Z_{n+1}-\bar Z_n | \F_n)=0$. Fix a large $N$ and consider $\bar Z_{n\wedge \tau_N}$ where $\tau_N=\inf\{n\ge N: Z_n<1\}$. Then $\bar Z_{n\wedge \tau_N}$ is a martingale for $n\ge N$ with jumps bounded by $B+a+1$, and applying Azuma-Hoeffding inequality again we get 
\begin{align*}
\P(\left|\bar Z_{n\wedge \tau_N}-\bar Z_N\right| \ge \eps n)\le 2\exp\left\{-\frac{\eps^2 (n-N)}{2(B+a+1)^2}\right\}
\end{align*}
for any $\eps>0$. By an argument similar to the first part of the proof, this implies that $\lim_{n\to\infty} \bar Z_{n\wedge \tau_N}/n=0$ a.s. However, the first part of the proof implies that $\tau_N=\infty$ for all but finitely many $N$'s a.s. Hence $\lim_{n\to\infty} \bar Z_{n}/n=0$ a.s. Now, the fact that $\liminf_{n\to\infty} Z_n/n\ge a$ gives us that $\sum_{i=1}^n \frac{\sigma}{\max\{1, Z_n\}}\le O(\log n)$ so that $\bar Z_n -\hat Z_n=o(n)$ thus implying the statement of the lemma.
\end{proof}

\begin{prop}
\label{liminfW}
Consider $S_n$ and $\tilde\rho$ defined in~\eqref{Sn} and~\eqref{tilde-rho} respectively. Then $\lim_{n\to\infty} \frac{S_n}n=\tilde\rho$ a.s.
\end{prop}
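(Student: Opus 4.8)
The plan is to show that $S_n$ fits the template of Lemma~\ref{LemLLN} with limiting drift $\tilde\rho$. First I would record the one-step law of the embedded chain $\zeta(n)$: at an interior state $(x,y)$ (that is $x,y\ge 1$, equivalently on $\{\tau>n\}$) the total rate is $R(x,y)$ from~\eqref{Rn} and $\zeta$ moves to $(x+1,y),(x-1,y),(x,y+1),(x,y-1)$ with probabilities $(\l_1+\a_1 x)/R_n$, $\b_1 y/R_n$, $(\l_2+\a_2 y)/R_n$, $\b_2 x/R_n$. With $S(x,y)=s_1x+s_2y$, where $s_1=\a_1\a_2+\b_1\b_2+2\a_2\b_2$ and $s_2=\a_1\a_2+\b_1\b_2+2\a_1\b_1$ are the coefficients of~\eqref{Sn}, the increment $S_{n+1}-S_n$ equals $\pm s_1$ on a horizontal move and $\pm s_2$ on a vertical move, so
\[
\E\big(S_{n+1}-S_n\mid \zeta(n)=(x,y)\big)=\frac{(s_1\a_1-s_2\b_2)x+(s_2\a_2-s_1\b_1)y+s_1\l_1+s_2\l_2}{R_n}.
\]

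The crux — and the reason these particular coefficients were chosen — is the pair of identities $s_1\a_1-s_2\b_2=\tilde\rho(\a_1+\b_2)$ and $s_2\a_2-s_1\b_1=\tilde\rho(\a_2+\b_1)$, obtained by factoring the two left-hand sides as $(\a_1\a_2-\b_1\b_2)(\a_1+\b_2)$ and $(\a_1\a_2-\b_1\b_2)(\a_2+\b_1)$. Granting them and using $R_n=(\a_1+\b_2)x+(\a_2+\b_1)y+\l_1+\l_2$, the numerator becomes $\tilde\rho R_n+C$ with $C=(s_1-\tilde\rho)\l_1+(s_2-\tilde\rho)\l_2=2\b_2(\a_2+\b_1)\l_1+2\b_1(\a_1+\b_2)\l_2>0$, so that on $\{\tau>n\}$
\[
\E\big(S_{n+1}-S_n\mid \F_n\big)=\tilde\rho+\frac{C}{R_n},\qquad C>0.
\]

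To match Lemma~\ref{LemLLN} I would observe that $|S_{n+1}-S_n|\le\max(s_1,s_2)=:B$, that $C>0$ supplies the drift lower bound $\tilde\rho$, and that, since all coefficients are strictly positive ($\b_1\b_2>0$ forces $s_1,s_2>0$), there is a constant $c>0$ with $R(x,y)\ge c\,S(x,y)$ for every $(x,y)$; hence $C/R_n\le(C/c)/S_n$, which is exactly the upper bound with $\sigma=C/c$. Lemma~\ref{LemLLN} then returns $S_n/n\to\tilde\rho$.

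The one delicate point is the boundary, since the drift identity holds only at interior states — indeed $S_n/n$ would not converge to $\tilde\rho$ if the chain escaped along an axis after $\tau$. As the case $\a_1\a_2>\b_1\b_2$ is argued by contradiction assuming $\P(\tau=\infty)>0$, it suffices to obtain the limit on $\{\tau=\infty\}$, where $\zeta_1(n),\zeta_2(n)\ge 1$ for all $n$ (so the bounds hold throughout and $S_n\ge s_1+s_2>0$ stays away from $0$). To keep the application of Lemma~\ref{LemLLN} unconditional, I would run the true dynamics up to $\tau$ and afterwards add a deterministic increment $\tilde\rho$ at each step; the resulting $\check S_n$ satisfies the hypotheses for all $n$ (the added increments have mean $\tilde\rho$ and modulus $\tilde\rho\le B$), whence $\check S_n/n\to\tilde\rho$ a.s., and $\check S_n=S_n$ on $\{\tau=\infty\}$. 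I expect the only genuine work to be the two drift identities; once one sees that the coefficients of $S$ are rigged so that both the $x$- and $y$-terms share the common factor $\tilde\rho=\a_1\a_2-\b_1\b_2$, the remaining verification of the hypotheses of Lemma~\ref{LemLLN} is routine.
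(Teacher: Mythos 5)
Your proof is correct and follows the same route as the paper's: the paper likewise computes the one-step drift of $S_n$ using the interior transition probabilities, arrives at exactly your identity
$$
\E\bigl(S_{n+1}-S_n \mid \zeta(n)=(x,y)\bigr)=\tilde\rho+\frac{2\l_1\b_2(\a_2+\b_1)+2\l_2\b_1(\a_1+\b_2)}{R_n}\ \ge\ \tilde\rho ,
$$
notes $R_n\ge C_1 S_n$, and invokes Lemma~\ref{LemLLN} with $a=\tilde\rho$; your factorization identities are just a tidier packaging of the paper's ``easy algebraic computation''. Where you genuinely add something is the boundary caveat, and you are right to insist on it: the drift formula uses all four jump probabilities and is therefore valid only when $\zeta_1(n),\zeta_2(n)\ge 1$, a restriction the paper's proof never states. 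Taken literally, the unconditional claim $S_n/n\to\tilde\rho$ a.s.\ is even inconsistent with Theorem~\ref{T2}: on the almost sure event that the chain eventually escapes along, say, the horizontal axis, the up-jump probability from $(x,0)$ is $\l_2/(\l_1+\l_2+\a_1 x)\to 0$, so asymptotically almost all steps are right-jumps and $S_n/n\to\a_1\a_2+\b_1\b_2+2\a_2\b_2>\tilde\rho$. What is both true and actually needed downstream is the limit on $\{\tau=\infty\}$ (the contradiction argument in Section~\ref{alpha>beta} only invokes it there), and your device of running the true dynamics up to $\tau$ and appending deterministic increments $\tilde\rho$ afterwards---so that Lemma~\ref{LemLLN} applies unconditionally while the modified process agrees with $S_n$ on $\{\tau=\infty\}$---is a clean way to repair the statement without touching the rest of the argument.
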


\begin{proof}[Proof of Proposition~\ref{liminfW}.]
Note that the jumps of $S_n$ are bounded (they can take at most four distinct values). The expected drift of $S_n$ is given by
\begin{align*}
\E(S_{n+1}- S_n |\zeta(n)=(x,y))&=\frac{(\a_1\a_2+\b_1\b_2)
((\a_1-\b_2)x+(\a_2-\b_1)y)}{R_n}\\
&+2\frac{\a_2\b_2(\a_1x-\b_1y)+\a_1\b_1(\a_2y-\b_2x)}{R_n}\\
&
+\frac{(\l_1+\l_2)(\a_1\a_2+\b_1\b_2) +2\l_1\a_2\b_2+2\l_2\a_1\b_1}{R_n}
\end{align*}
An easy algebraic computation gives that the sum of terms with $x$ in the first and the second numerators on the right hand side of the preceding display is equal to $\tilde\rho(\a_1+\b_2)x$. Similarly, the sum of all terms with $y$ in the same numerators is equal to $\tilde\rho(\a_2+\b_1)y$. Rearranging all terms with $\l_1$ and $\l_2$ in the last numerator of the same display gives the following quantity 
$$
\tilde\rho(\l_1+\l_2)+2\l_1\b_2(\a_2+\b_1)+2\l_2\b_1(\a_1+\b_2).
$$
Thus, we obtain that 
\begin{align}\label{Sndrift}
\E(S_{n+1}- S_n | \zeta(n)=(x,y))= \tilde\rho
+\frac{2\l_1\b_2(\a_2+\b_1)+2\l_2\b_1(\a_1+\b_2)}{R_n}\ge \tilde\rho>0.
\end{align}
Note that $R(x,y)\ge (x+y)\, \min\{\b_1,\b_2\} $ and 
$$
S(x,y)\le (x+y)\, \max\{\a_1\a_2+\b_1\b_2+2\a_2\b_2, \a_1\a_2+\b_1\b_2+2\a_1\b_1\}
$$
and since $\b_1,\b_2>0$ we have $R(x,y)\ge C_1 S(x,y)$ for some positive constant $C_1$, so that $R_n\ge C_1 S_n$. 
Now the result follows from Lemma~\ref{LemLLN} with $a=\tilde \rho$.
\end{proof}

\begin{corollary}
\label{corS}
Let $\kappa=\liminf_{n\to\infty} \frac{T_n}n$. Then $\P(\kappa>0)=1$.
\end{corollary}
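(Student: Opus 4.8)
The plan is to deduce the corollary directly from Proposition~\ref{liminfW} by a pointwise comparison of the linear form $T$ with the linear form $S$, both evaluated along the same trajectory $\zeta(n)\in\Z_+^2$. The key observation is that $T(x,y)$ and $S(x,y)$ are linear forms in $(x,y)$ with strictly positive coefficients, and on the quarter plane $x,y\ge0$ any such form dominates a positive multiple of any other.

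First I would verify that both coefficients of $T(x,y)=\b_2 x+(r\b_2+\a_1-\a_2)y$ are strictly positive. The coefficient of $x$ is $\b_2>0$ by assumption. For the coefficient of $y$, I substitute the explicit root $r=\frac{-(\a_1-\a_2)+D}{2\b_2}$ into $r\b_2+\a_1-\a_2$, obtaining $\frac{(\a_1-\a_2)+D}{2}$; since $D=\sqrt{(\a_1-\a_2)^2+4\b_1\b_2}>0$ and $\a_1\ge\a_2$ by~\eqref{a1a2}, this is strictly positive. Writing $S(x,y)=s_1 x+s_2 y$ with $s_1=\a_1\a_2+\b_1\b_2+2\a_2\b_2$ and $s_2=\a_1\a_2+\b_1\b_2+2\a_1\b_1$, both $s_1$ and $s_2$ are plainly positive as well.

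Next, setting $c=\min\{\b_2/s_1,\,(r\b_2+\a_1-\a_2)/s_2\}>0$, I would note that $T(x,y)-cS(x,y)=(\b_2-cs_1)x+((r\b_2+\a_1-\a_2)-cs_2)y\ge0$ for all $x,y\ge0$, since both coefficients are non-negative by the choice of $c$. Evaluating along the trajectory yields $T_n\ge cS_n$ for every $n$.

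Finally, dividing by $n$ and invoking Proposition~\ref{liminfW}, which asserts $S_n/n\to\tilde\rho>0$ almost surely, I obtain $\kappa=\liminf_{n\to\infty}T_n/n\ge c\lim_{n\to\infty}S_n/n=c\tilde\rho>0$ almost surely, whence $\P(\kappa>0)=1$. There is no genuine obstacle here; the only point requiring care is the sign check for the $y$-coefficient of $T$, which relies on substituting the explicit value of $r$ and using $\a_1\ge\a_2$.
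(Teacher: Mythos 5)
Your proof is correct and follows essentially the same route as the paper: both arguments establish the pointwise bound $T_n\ge c\,S_n$ (the paper via $T(x,y)\ge(x+y)\b_2\min(1,r)$ and $S(x,y)\le(x+y)\max\{s_1,s_2\}$, you via a coefficient-by-coefficient comparison) and then conclude by Proposition~\ref{liminfW}. The only cosmetic difference is that you verify positivity of the $y$-coefficient of $T$ by substituting the explicit root $r$, whereas the paper uses $r\b_2+\a_1-\a_2\ge r\b_2$ together with $\a_1\ge\a_2$; both checks are valid.
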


\begin{proof}
Similarly to the preceding proof, 
$$
T(x,y)\ge (x+y)\min(\b_2,r\b_2+\a_1-\a_2)\ge (x+y)\b_2\min (1,r)
$$
since $\a_1-\a_2\ge 0$, and thus $T_n\ge C_2S_n$ for some $C_2>0$. Hence, by Proposition~\ref{liminfW},
$$
\liminf_{n\to\infty} \frac{T_n}{n}\ge C_2 \liminf_{n\to\infty} \frac{S_n}{n}=C_2 \tilde\rho>0.
$$
\end{proof}

\begin{prop}
\label{Rn_prop}
For every $\delta>0$ and $\eps>0$ there exists $N$ such that 
$$
\P\left(R_n\le \frac{\a_1+r\b_2}{1+\delta}n, \,\, \forall\, n\geq N\right)\geq 1-\eps.
$$
\end{prop}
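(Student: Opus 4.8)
The plan is to read off the asymptotics of $R_n$ directly from the linear decomposition~\eqref{RnSnTn}, namely $R_n=(\l_1+\l_2)+\frac{\a_1+r\b_2}{\tilde\rho}S_n+lT_n$, in which the coefficient $\frac{\a_1+r\b_2}{\tilde\rho}$ is positive and $l<0$. Dividing through by $n$ and letting $n\to\infty$, the constant term $(\l_1+\l_2)/n$ vanishes, so the whole question reduces to controlling $S_n/n$ and $T_n/n$.

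For the $S_n$ term I would invoke Proposition~\ref{liminfW}, which gives $S_n/n\to\tilde\rho$ a.s.; hence $\frac{\a_1+r\b_2}{\tilde\rho}\cdot\frac{S_n}{n}\to \a_1+r\b_2$ a.s. The role of the $T_n$ term is precisely to push the limit strictly below this value: since $l<0$ and, by Corollary~\ref{corS}, $\liminf_{n}T_n/n=\kappa\ge C_2\tilde\rho>0$ a.s., multiplying by the negative constant $l$ flips the liminf into a limsup, so that $\limsup_{n} l\,\frac{T_n}{n}=l\kappa<0$ a.s. Combining the two contributions,
$$
\limsup_{n\to\infty}\frac{R_n}{n}\le (\a_1+r\b_2)+l\kappa\le (\a_1+r\b_2)+l\,C_2\tilde\rho<\a_1+r\b_2\qquad\text{a.s.}
$$
It is essential here to keep the negative $T_n$ term rather than simply discarding it: dropping it would only give $\limsup_{n} R_n/n\le \a_1+r\b_2$, which cannot beat any factor $\tfrac{1}{1+\delta}<1$. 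The strict gap produced by $l\,C_2\tilde\rho<0$ is exactly what makes the bound $\frac{\a_1+r\b_2}{1+\delta}$ attainable.

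It remains to pass from this a.s.\ statement to the uniform form claimed. Fix any $\delta>0$ small enough that $\frac{\a_1+r\b_2}{1+\delta}$ still exceeds the a.s.\ upper bound $(\a_1+r\b_2)+l\,C_2\tilde\rho$ just obtained (any $\delta$ below a model-dependent threshold will do, which is all that the subsequent contradiction argument requires). Then the event $\Omega_\delta=\{R_n\le \frac{\a_1+r\b_2}{1+\delta}\,n\ \text{for all large }n\}$ has probability $1$. Writing $A_N=\{R_n\le \frac{\a_1+r\b_2}{1+\delta}\,n\ \forall\, n\ge N\}$, the sets $A_N$ increase to $\Omega_\delta$ as $N\to\infty$, so $\P(A_N)\uparrow 1$ by continuity of $\P$ from below; choosing $N$ with $\P(A_N)\ge 1-\eps$ then finishes the proof.

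The only genuine work is the strict inequality $\limsup_{n} R_n/n<\a_1+r\b_2$, and its one delicate point is sign-tracking: because the coefficient $l$ is negative, it is a \emph{lower} bound on $\liminf_{n} T_n/n$ (Corollary~\ref{corS}) that converts into an \emph{upper} bound on $\limsup_{n} R_n/n$. Everything else — the vanishing constant term, the convergence of the $S_n$ contribution via Proposition~\ref{liminfW}, and the continuity-of-measure step — is routine.
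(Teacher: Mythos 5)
Your proposal is correct and takes essentially the same route as the paper's own proof: the same decomposition~\eqref{RnSnTn}, with Proposition~\ref{liminfW} controlling the $S_n$ term and Corollary~\ref{corS} (via the deterministic bound $\kappa\ge C_2\tilde\rho$) making the $l\,T_n$ term strictly negative linearly in $n$, followed by a choice of sufficiently small $\delta$. The only differences are presentational — you state an a.s.\ $\limsup$ bound and invoke continuity of measure, whereas the paper writes finite-$n$ bounds holding with probability at least $1-\eps$ — and both arguments (yours explicitly, the paper's implicitly) establish the claim only for $\delta$ below a model-dependent threshold, which is exactly what the subsequent contradiction argument requires.
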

\begin{proof}[Proof of Proposition~\ref{Rn_prop}.]
Using equation~\eqref{RnSnTn}, Proposition~\ref{liminfW} and Corollary~\ref{corS} we obtain that for sufficiently small $\delta>0$, sufficiently large $n$ and any fixed $\eps$ 
$$
R_n= (\l_1+\l_2)+ (\a_1+r\b_2)\frac{S_n}{\tilde\rho}-l T_n
\le (\l_1+\l_2)+ (\a_1+r\b_2)(1+\delta)n +l\, \frac{\kappa}{2}n,
$$
with probability at least $1-\eps$. Recall that $\kappa>0$ by Corollary~\ref{corS}, and that $l<0$. Let $\delta>0$ be so small that 
\begin{equation}
\label{delta}
(\l_1+\l_2)+ (\a_1+r\b_2)(1+\delta)n + l\, \frac{\kappa}{2}n\leq (\a_1+r\b_2)(1-\delta)n\leq (\a_1+r\b_2)
\frac{n}{1+\delta}.
\end{equation}
Thus, we obtain that, with probability at least $1-\eps$, 
$$
R_n\le \frac{\a_1+r\b_2}{1+\delta}n,
$$
for all sufficiently large $n$, as claimed.
\end{proof}

The rest of the proof is similar to the symmetric case, and we are going to explain briefly some minor modifications required. First, define 
\begin{equation}
\label{tau1}
\tau=\min\{n: \zeta_1(n)=0\text{ or }\zeta_2(n)<y_0\},
\end{equation}
where $y_0$ is such that thr bound~\eqref{eqUsnos} holds. Then, assume that $\P(\tau=\infty)=0$ and arrive at a contradiction. To this end, fix $\delta>0$ such that equation~\eqref{delta} holds, and, given $N>0$ define 
$$
\eta_N=\min\left\{n\ge N: R_n>\frac{\a_1+r\b_2}{1+\delta}n \right\}.
$$
Assume that $N$ is sufficiently large, so that probability $\P(\eta_N=\infty)$ is sufficiently close to~$1$ to ensure that $\P(\eta_N=\infty, \tau=\infty)>0$. Then Proposition~\ref{Rn_prop} implies that 
\begin{align*}
\E\left(U_{n+1}^2|{\cal F}_n\right) &\geq U_{n}^2a_n 
\quad \text{on} \quad \left\{n<\min\left(\eta_N, \tau\right)\right\},
\\
\text{where }
a_n&=1+\frac{2(1+\delta)}{n}.
\nonumber
\end{align*}
Similarly to the symmetric case, it can be shown by using the inequality in the preceding display that $\P(\eta_N=\infty, \tau=\infty)=0$. This contradicts the assumption that $\P(\tau=\infty)>0$.

Finally, it might happen that $\zeta_1(\tau)>0$ and $0<\zeta_2(\tau)=y_0-1$. In this case, observe that the probability of hitting the horizontal axis $\{(x,0), x\in \Z_{+}\}$ is bounded below uniformly over starting location $(x, y_0-1)$, $x\ge 1$. Indeed, 
\begin{align*}
& \P(\zeta(\tau+y_0-1)=(x,0) | \zeta(\tau)=(x,y_0-1))=\prod_{k=1}^{y_0-1} \frac{\b_2 x}{\l_1+\l_2+(\a_1+\b_2) x+(\a_2+\b_1) (y_0-k)}
\\ & \ge 
\prod_{k=0}^{y_0-1} \frac{\b_2 }{\l_1+\l_2+\a_1+\b_2 +(\a_2+\b_1) (y_0-k)}={\rm Const}(\l_1,\l_2,\a_1,\a_2,\b_1,\b_2,y_0)>0
\end{align*}
since $x\ge 1$. Consequently, with probability one, the process eventually hits the boundary.

\subsubsection{Proof of Lemma~\protect\ref{lem_get2axesT2} in case  \texorpdfstring{$\a_1\a_2=\b_1\b_2$}{Lg}}
\label{alpha=beta}
The proof will be very similar to the case $\a_1\a_2>\b_1\b_2$, so we provide only its sketch. Let $S(x,y)$, $R(x,y)$, $S_n$, $R_n$ and $\tilde \rho$ be the same as in the previous section. Note that $\tilde\rho=0$ in this case, so we need to find a replacement for Lemma~\ref{LemLLN}.

Observe that due to the fact that $\a_1\a_2=\b_1\b_2$ we have $\a_1>0$ and $\a_2>0$ since $\b_1\b_2>0$ and
$$
S(x,y)=2\a_2 (\a_1+\b_2) x + 2\a_1 (\a_2+\b_1) y,
\ \
R(x,y)= (\a_1+\b_2) x + (\a_2+\b_1) y+\l_1+\l_2
$$
so that $R(x,y)\ge \frac{ S(x,y) }{\max\{2\a_1,2\a_2\}}$. Then~\eqref{Sndrift} becomes
$$
\E(S_{n+1}-S_n | {\cal F}_n)=\frac{2\l_1\b_2(\a_2+\b_1)+2\l_2\b_1(\a_1+\b_2)}{R_n}\in\left[ 0, \frac{C_3}{S_n} \right] 
$$
for some $C_3\ge 0$. Therefore, $S_n$ can be majorized by a Lamperti random walk (see~\cite{MPW})
and hence by Theorem~3.2.7 in~\cite{MPW} we get that
$$
\limsup_{n\to\infty} \frac{\log S_n }{\log n}\le 1/2\qquad \text{a.s.}
$$
As a result, the statement of Proposition~\ref{Rn_prop} holds with the displayed formula replaced by 
$$
\P\left(R_n\le n^{1/2+\delta}, \,\, \forall\, n\geq N\right)\geq 1-\eps
$$
and by setting $\delta=1/6$, on the event $R_n\le n^{2/3}$ the RHS of~\eqref{eqUsnos} becomes
$$
 U_n^2\,\left(1+\frac{2(\a_1+r\b_2)}{n^{2/3}}\right)
$$
leading to contradiction similarly to the case $\a_1\a_2>\b_1\b_2$.

\section{Appendix}\label{appendix}
In this section we recall the definition of the competition process from~\cite{Reuter} and briefly analyse the applicability of some theorems from that paper to competition processes in ours. 
 
Recall that the competition process in~\cite{Reuter} is defined as a CTMC $X(t)=(X_1(t), x_2(t))\in \Z_{+}^2$ that evolves as follows. Given the state $(x_1, x_2)\in \Z_{+}^2$, the CTMC jumps to
\begin{align}
\label{reuter}
\begin{split}
(x_1+1,x_2) &\quad\text{with rate}\quad a(x_1, x_2),\\
(x_1,x_2+1) &\quad\text{with rate}\quad b(x_1, x_2),\\
(x_1-1,x_2) &\quad\text{with rate}\quad c(x_1, x_2)\quad\text{if}\quad x_1>0,\\
(x_1,x_2-1) &\quad\text{with rate}\quad d(x_1, x_2)\quad\text{if}\quad x_2>0,\\
(x_1-1,x_2+1) &\quad\text{with rate}\quad e(x_1, x_2)\quad\text{if}\quad x_1>0,\\
(x_1+1,x_2-1) &\quad\text{with rate}\quad f(x_1, x_2)\quad\text{if}\quad x_2>0,
\end{split}
\end{align}
where $a(x_1, x_2), \ldots, f(x_1, x_2)\geq 0$. Following \cite{Reuter}, let us assume that the Markov chain is regular in a sense that there exists 
exactly one associated transition matrix. For simplicity, we assume in addition that Markov chain $X(t)$ is irreducible, 
although in general there might be absorption states.

Define the following quantities 
\begin{align}\label{eqrs}
r_k&=\max_{\substack{x_1, x_2>0;\\ x_1+x_2=k}} [a(x_1, x_2)+b(x_1, x_2)],\\
s_k&=\min_{\substack{x_1, x_2>0;\\ x_1+x_2=k}} [c(x_1, x_2)+d(x_1, x_2)],\nonumber\\
\tau&=\inf(t\geq 0: X_1(t)=0 \text{ or } X_2(t)=0).\nonumber
\end{align}
It follows from Theorem 2 in~\cite{Reuter} that 
\begin{equation}
\label{A}
A:=\sum\limits_{k=2}^{\infty}\frac{s_2\ldots s_k}{r_2\ldots r_k}=\infty,
\end{equation}
is a sufficient condition for hitting time $\tau$ to be finite almost surely.

Consider, for simplicity, the competition process with linear interaction (with transition rates of type 2 defined in~\eqref{eqjumps2}) in the symmetric case, that is $\a_i=\a, \b_i=\b, \l_i=\l, i=1,2$. 
Then
$$
r_k=2\l+\a k \quad \text{and} \quad s_k=\b k,
$$
and it is easy to see that if $\a\le \b$ then 
$$
\frac{ s_2\ldots s_k}{r_2\ldots r_k}\geq 
\begin{cases}
 C_1\left(\frac{\b}{\a}\right)^k, &\text{if }\a<\b,\\
 \frac{C_2}{k^{{2\l/\a}}}, &\text{if }\a=\b,
\end{cases}
$$
for some $C_1,C_2>0$ for all sufficiently large $k$. Consequently, if $\a<\b$ or $\a=\b<2\l$ then $A=\infty$; hence $\tau$ is almost surely finite. However, if $\a>\b$ or $\a=\b\ge 2\l$, then the results of \cite{Reuter} are not applicable.

Further, we are going to compare the long term behaviour of two simple competition processes.
One process of interest is the competition process $X(t)$ given in Example 2 in~\cite{Reuter}. This process is specified by the following choice of transition rates in~\eqref{reuter}
\begin{equation}
\label{Example2}
\begin{array}{llll}
a(x_1, x_2)&\equiv a, & b(x_1, x_2)\equiv b,& c(x_1, x_2)=\gamma
 x_1, \\ 
 d(x_1, x_2)&=\delta x_2, & e(x_1, x_2)=\eps x_1 x_2,& f(x_1,x_2)\equiv 0,
\end{array}
\end{equation}
where $a, b, \gamma, \delta, \eps>0$. The other process is a special case of the competition process with linear interaction which transition rates are specified by parameters $\a_1=\a_2=0$, $ \b_1=\delta, \b_2=\gamma, \l_1=a, \l_2=b>0$. In the introduction we interpreted such competition process as the OK Corral model with ``resurrection".

Interactions in these processes are different. However, their behaviours inside the quarter 
plane are quite similar. Indeed, the mean drift of each of these processes inside the domain are directed towards the axes. Further, quantities $r_k=a+b$, $k\geq 1$, and $s_k=k \min\{\gamma, \delta\},$ $k\geq 1$, are the same for both processes. Now either~\cite[Theorem 2]{Reuter}, or the argument based on \cite[Lemma 7.3.6]{MPW} (similar to Lemma~\ref{L2}) imply that $\tau<\infty$ a.s.\ in both cases. 

At the same time, these processes evolve differently, because of the difference in the transition rates on the boundary. The process with rates given by~\eqref{Example2} has a strong mean drift towards the origin, while an OK Corral type process jumps away with constant rate, as its death rates on the boundary are zero. This seemingly small change results in quite substantial difference in the long term behaviour of the processes. Indeed, define $\tilde r_k$ and $\tilde s_k$ by the same formula as $r_k$ and $s_k$ in~\eqref{eqrs} by taking the maximum (minimum resp.) over the set $x_1,x_2\ge 0$, that is, now we include the boundary states $(k, 0)$ and $(0, k)$. Theorem~4 in~\cite{Reuter} states that
$$
\tilde A=\sum\limits_{k=1}^{\infty}\frac{\tilde r_1\ldots \tilde r_{k-1}}{\tilde s_1\ldots \tilde s_k}<\infty
$$
is a sufficient condition for the competition process with transition rates~\eqref{reuter} to be positive recurrent, implying that the process governed by~\eqref{Example2} is positive recurrent. Indeed, $\tilde r_k=r_k=a+b>0$ and $\tilde s_k=s_k=k \min\{\g,\delta\}$, $k\ge1$, so,
$$
\tilde A=\frac{1}{a+b}\sum\limits_{k=1}^{\infty}\frac 1{k!}\left(\frac{a+b}{\min\{\g,\delta\}}\right)^k<\infty.
$$
(Note also that positive recurrence of this process follows from the Foster criterion for positive recurrence with Lyapunov function $f(x_1, x_2)=x_1+x_2$, but we skip further details). At the same time Theorem 4 from~\cite{Reuter} is not applicable to the OK Corall model with ``resurrection'', as $\tilde s_k=0$, while {\em our} Theorem~\ref{T2} shows that this process is transient and escapes to infinity in the only possible way, i.e. along the boundary, as described.

\section*{Acknowledgement}
SV research is partially supported by the Swedish Research Council grant VR2014-5147. We thank Mikhail Menshikov and Svante Janson for helpful discussions.

\end{document}